\documentclass[10pt, leqno]{amsart}
\usepackage{amsmath} 
\usepackage{amsthm}

\usepackage{amssymb}
\usepackage{xy}
\usepackage{xypic}
\usepackage{pifont}
\usepackage{stmaryrd}

\usepackage[usenames, dvipsnames]{color}

\theoremstyle{plain}

\newcommand{\zig}{\addtocounter{Lem}{1}\tag{\theLem}}

\newcommand{\dotcup}{\ensuremath{\mathaccent\cdot\cup}}

\def\:{\colon} 
\DeclareMathOperator*{\colim}{colim}
\DeclareMathOperator*{\holim}{holim}

%
%
\newtheorem{Lem}{Lemma}[section]

\newtheorem{Cor}[Lem]{Corollary}
\newtheorem{Thm}[Lem]{Theorem}

{\theoremstyle{definition} 
\newtheorem{Ex}[Lem]{Example}

\newtheorem{Rk}[Lem]{Remark}

\newtheorem{Def}[Lem]{Definition}}
\begin{document} 
 
\title[Lifting group cohomology to spectra for trivial profinite 
actions]{A lift from group cohomology to spectra for trivial profinite 
actions} 

\author{Daniel G. Davis}
 \maketitle

\begin{abstract}
Let $G$ be a profinite group, $X$ a discrete $G$-spectrum with 
trivial action, and $X^{hG}$ the continuous homotopy fixed points. 
For any $N \trianglelefteqslant_o G$ (``$o$" for open), 
$X = X^N$ is a $G/N$-spectrum with trivial action. We construct a zigzag $\colim_N X^{hG/N} \xrightarrow{\,\Phi\,} \colim_N (X^{hN})^{hG/N} \xleftarrow{\,\Psi\,} X^{hG}$, where 
$\Psi$ is a weak equivalence. When $\Phi$ is a weak equivalence, 
this zigzag gives an interesting model for $X^{hG}$ (for example, its 
Spanier-Whitehead dual is $\holim_N F(X^{hG/N}, S^0)$). We prove that this happens in the following cases: (1) $|G| < \infty$; (2) $X$ is bounded 
above; (3) there exists $\{U\}$ cofinal in $\{N\}$, such that for each $U$, $H^s_c(U, \pi_\ast(X)) = 0$, 
for $s > 0$. Given (3), for each $U$, there is a weak equivalence $X \xrightarrow{\,\simeq\,} X^{hU}$ and $X^{hG} \simeq X^{hG/U}$. For case (3), we give a series of corollaries and examples. 
As one instance of a family of examples, if $p$ is a prime, $K(n_p,p)$ 
the $n_p$th Morava $K$-theory $K(n_p)$ at $p$ for some $n_p \geq 1$, 
and $\mathbb{Z}_p$ the $p$-adic integers, then for each $m \geq 2$, 
(3) is satisfied when $G \leqslant \prod_{p \leq m} 
\mathbb{Z}_p$ 
is closed, $X = \bigvee_{p > m} (H\mathbb{Q} \vee K(n_p,p))$,
and $\{U\} := \{N_G \mid N_G  \trianglelefteqslant_o G\}$.
\end{abstract}

\section{Introduction}\label{intro_proper}

\subsection{The problem considered in this work and our main 
results}\label{intro} In this paper, ``spectrum" always refers to an object in $Sp^\Sigma$, the stable model category of symmetric spectra of 
simplicial sets \cite{HSS}.

Let $G$ be any profinite group and let $X$ be a discrete $G$-spectrum 
(in the sense of \cite{joint}), such that the $G$-action is trivial (thus, any spectrum, equipped with the trivial $G$-action, can be regarded as such a discrete 
$G$-spectrum). 
Let $\{N\}$ be the collection of open normal subgroups 
of $G$. For each $N$, $G/N$ is a finite group and $X = X^N$ can be regarded as a $G/N$-spectrum having trivial 
$G/N$-action, and hence, the homotopy fixed point spectrum 
\[X^{hG/N} := (X^N)^{hG/N}\] is defined. Then there is a map
\[\Phi \: \colim_N X^{hG/N} \rightarrow \colim_N (X^{hN})^{hG/N},\] where the colimits are indexed by the collection $\{N\}$, and 
a weak equivalence 
\[\colim_N (X^{hN})^{hG/N} \xleftarrow[\,\textstyle{\simeq}\,]{\ \ 
\textstyle{\Psi} \ \ } X^{hG}.\] Here, by definition (see \cite[Section 3.1]{joint}), $X^{hG}$ (and, similarly, $X^{hN}$) is 
the continuous homotopy fixed point spectrum
\[X^{hG} := (X_{fG})^G,\] where $\theta_{\scriptscriptstyle{G}} \: X \xrightarrow{\,\simeq\,} X_{fG}$ is a functorial fibrant replacement in the model category $\Sigma\mathrm{Sp}_G$ of discrete $G$-spectra (hence, this map is a trivial cofibration in  $\Sigma\mathrm{Sp}_G$). The construction of 
$\Phi$ and $\Psi$ is 
explained in Section \ref{map_phi} (strictly speaking, the source and target of $\Phi$ involve identifications, which are explained in Section \ref{map_phi}). 

It is natural to ask when $\Phi$ is a weak equivalence, 
since this implies that there is an equivalence 
\begin{equation}\label{corollary}\zig
X^{hG} \simeq \colim_N X^{hG/N} 
\end{equation} 
that expresses 
the homotopy fixed points for 
a typically infinite group in terms of homotopy fixed points 
for finite groups. 

To point out another interesting feature of (\ref{corollary}), we note 
the following. 
If $M$ is a discrete $G$-module and $H$ is a closed subgroup of 
$G$, then we let $H^\ast_c(H, M)$ denote the continuous cohomology of $H$, with coefficients in $M$, where $M$ is regarded as a discrete $H$-module. 
For each $t \in \mathbb{Z}$, the stable homotopy group $\pi_t(X)$ is naturally a discrete $G$-module with trivial $G$-action. For any 
$N$, $(\pi_t(X))^N = \pi_t(X)$, so that every $\pi_t(X)$ can be 
regarded as a $G/N$-module with trivial action. Also, if $K$ is a finite group and $Y$ is a (naive) $K$-spectrum, then 
$K$, with the discrete topology, is a profinite group 
and $Y$ can be regarded as a discrete $K$-spectrum, and 
the usual homotopy fixed point spectrum $\mathrm{Map}_\ast(EK_+, Y)^K$ and the continuous homotopy fixed point spectrum $Y^{hK}$ 
are equivalent to each other. 

Since the homotopy fixed points 
$(-)^{hG}$ and each $(-)^{hG/N}$ are 
the right derived functors of the right Quillen functors 
\[(-)^{G} \: \Sigma\mathrm{Sp}_G \to Sp^\Sigma \ \ \ \text{and each} 
\ \ \ (-)^{G/N} \: \Sigma\mathrm{Sp}_{G/N} \to Sp^\Sigma\] 
\cite[Section 3.1]{joint}, 
respectively, the equivalence in (\ref{corollary}) (when it holds) 
is a lift from 
algebra to homotopy theory of the 
well-known isomorphism
\[H^*_c(G, \pi_t(X)) \cong \colim_N H^*(G/N, \pi_t(X)), \ \ \ \text{all} \ 
t \in \mathbb{Z}\] 
(which is valid for any $X$ in $\Sigma\mathrm{Sp}_G$ with trivial 
action), for $H^\ast_c(G, -)$ and each $H^\ast(G/N, -)$ are the 
right derived functors of the familiar left exact functors 
$(-)^G$ and $(-)^{G/N}$, respectively, that land in abelian groups.   

Now we come to our work on $\Phi$. In Theorem \ref{WhenFinite}, we 
show that $\Phi$ is a weak equivalence whenever $G$ is finite. But 
as mentioned earlier, in the world of continuous actions by profinite 
groups, we usually are mostly interested in the case where $G$ is 
infinite: for possibly infinite profinite groups, we have two main 
results, Theorems \ref{coconn} and \ref{vanishing} below. Also, 
for Theorem \ref{vanishing}, we give -- in Section \ref{1.2} -- several corollaries and a few examples. As a quick sample of these, we 
note that in Example \ref{multipleK(n)s}, we prove, for instance, 
the following, where given $n \geq 0$ and a prime $p$, 
$K(n,p)$ denotes the $n$th Morava $K$-theory spectrum $K(n)$ at $p$, $\mathbb{Z}_p$ is the $p$-adic integers, and, for each $i \geq 1$, 
$p_i$ is the $i$th prime (that is, $p_1 = 2$, $p_2 = 3$, etc.): for 
\[X = \textstyle{\bigl(\bigvee_{i = 1}^\infty K(0, p_i)\bigr) \vee \bigl(\bigvee_{i = 1}^\infty 
K(n_i, p_{2i-1})\bigr)},\] with each $n_i \geq 1$, and 
$G = \prod_{i = 1}^\infty \mathbb{Z}_{p_{2i}},$ 
with $G$ acting trivially on $X$, $H$ any closed subgroup of $G$, and $N_H$ any open normal subgroup of $H$, there is a map
\[X \xrightarrow{\,\simeq\,} X^{hH} \simeq X^{hH/{N_H}}\] that, in addition to $\Phi$, is 
a weak equivalence, where, here, $\Phi$ is defined relative to $H$ (so that ``$N$" is an open normal subgroup of $H$).  

In work underway with Thomas Credeur, we are 
considering examples in chromatic homotopy theory of when the map 
$\Phi$ is not a weak equivalence. In Section \ref{1.3}, 
after describing two consequences of (\ref{corollary}), 
we explain that part of our motivation for studying $\Phi$ comes 
from examples in joint work of Beaudry, Goerss, Hopkins, and 
Stojanoska of homotopy fixed point spectra for the trivial action of 
certain profinite groups on the $K(n)$-local sphere. 

To help state our first main result, we 
recall that a spectrum $Z$ is {\em bounded above} if there exists a fixed integer $t_0$ such that $\pi_t(Z) = 0$, for all $t > t_0$. For example, if 
$Z$ is bounded above and $t_0$ can be taken to be $0$, then $Z$ is 
said to be {\em coconnective}. Applications of coconnective spectra 
are given, for instance, in \cite[Propositions 3.3, 3.9, 3.14]{DwyerEtAl} 
and \cite[Section 4]{LawsonHinfinity}.

\begin{Thm}\label{coconn}
If $G$ is any profinite group and $X$ is a discrete $G$-spectrum with 
trivial $G$-action, such that $X$ is bounded above, 
then the map $\Phi$ 
is a weak equivalence and 
$X^{hG} \simeq \colim_N X^{hG/N},$ where the colimit is indexed 
over all the open normal subgroups of $G$. 
\end{Thm}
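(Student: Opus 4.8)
The plan is to leverage the fact, given in the excerpt, that $\Psi$ is already a weak equivalence: it then suffices to show that the composite
\[
\colim_N X^{hG/N} \xrightarrow{\;\Phi\;} \colim_N (X^{hN})^{hG/N} \xleftarrow[\;\simeq\;]{\;\Psi\;} X^{hG}
\]
induces an isomorphism on homotopy groups. Using the constructions of Section \ref{map_phi}, I would first identify the map that this zigzag represents, restricted to the $N$th term of the source colimit, with the canonical ``inflation'' map $X^{hG/N} \to X^{hG}$ obtained by pulling the (trivial) $G/N$-action back along $G \twoheadrightarrow G/N$: indeed $X^{hG/N} = (X^N)^{hG/N} \to (X^{hN})^{hG/N}$ is $(-)^{hG/N}$ applied to the natural map $X^N \to X^{hN}$, and composing with the equivalence $(X^{hN})^{hG/N} \simeq X^{hG}$ that underlies $\Psi$ recovers, on descent spectral sequences, the algebraic inflation $H^s(G/N, \pi_t X) \to H^s_c(G, \pi_t X)$ as the relevant edge homomorphism. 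So the whole problem reduces to showing that the inflation maps assemble, after passing to the colimit over $N$, into a weak equivalence $\colim_N X^{hG/N} \xrightarrow{\simeq} X^{hG}$.

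For this I would compare homotopy fixed point spectral sequences. For each open normal subgroup $N$ (so that $G/N$ is finite) the homotopy fixed point spectral sequence takes the form $H^s(G/N, \pi_t X) \Rightarrow \pi_{t-s}(X^{hG/N})$, and for $G$ itself the descent spectral sequence of \cite{joint} takes the form $H^s_c(G, \pi_t X) \Rightarrow \pi_{t-s}(X^{hG})$. The hypothesis that $X$ is bounded above enters here and only here: if $\pi_t X = 0$ for $t > t_0$, then for each integer $m$ only the finitely many columns with $0 \le s \le t_0 - m$ can contribute to total degree $m$, so the filtration on each abutment is finite and both spectral sequences converge strongly. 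The inflation maps $X^{hG/N} \to X^{hG}$ induce a compatible family of maps of spectral sequences that is the algebraic inflation map on $E_2$-terms.

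Now pass to the colimit over $N$. Homotopy groups commute with filtered homotopy colimits, so $\pi_\ast \colim_N X^{hG/N} \cong \colim_N \pi_\ast X^{hG/N}$; and since filtered colimits of abelian groups are exact, the termwise colimit of the spectral sequences above is again a spectral sequence, whose $E_2$-term is $\colim_N H^s(G/N, \pi_t X) \cong H^s_c(G, \pi_t X)$ --- the standard identification of the continuous cohomology of a profinite group with the filtered colimit of the cohomologies of its finite quotients, valid because each $\pi_t X$ is a discrete $G$-module (automatic for the trivial action). Since the filtrations are finite in each total degree, uniformly in $N$, this colimit spectral sequence still converges strongly, now to $\colim_N \pi_\ast X^{hG/N}$. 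The colimit of the inflation maps is an isomorphism on $E_2$-terms, hence on $E_\infty$-terms, and, being compatible with the (finite) filtrations, it induces an isomorphism $\pi_\ast \colim_N X^{hG/N} \xrightarrow{\cong} \pi_\ast X^{hG}$. Hence $\Phi$ is a weak equivalence and $X^{hG} \simeq \colim_N X^{hG/N}$.

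The step I expect to require the most care is the identification in the first paragraph --- that the zigzag of $\Phi$ and $\Psi$ genuinely induces inflation on each term of the colimit --- since this requires unwinding the construction of Section \ref{map_phi} together with the equivalence $(X^{hN})^{hG/N} \simeq X^{hG}$ and the behaviour of its edge homomorphism; a lesser nuisance is the bookkeeping that the termwise colimit of the finite-group spectral sequences is both a spectral sequence and convergent to the colimit of the abutments, which is legitimate here exactly because bounded-above-ness makes the filtrations finite. An alternative that sidesteps the spectral sequences is to reduce to Eilenberg--MacLane spectra: both $\colim_N (-)^{hG/N}$ and $(-)^{hG}$ preserve cofiber sequences, the case $X = \Sigma^t HM$ (for $M$ an abelian group with trivial action) is exactly the isomorphism $\colim_N H^s(G/N, M) \cong H^s_c(G, M)$, a finite Postnikov tower then settles every $X$ that is bounded both above and below, and a general bounded-above $X$ is the filtered homotopy colimit $\hocolim_n \tau_{\ge n} X$ of its connective covers --- a colimit past which both functors commute precisely because, in each fixed degree, the homotopy fixed point filtrations stabilize, which is again where the bounded-above hypothesis is used.
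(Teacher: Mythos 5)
Your proposal takes a genuinely different route from the paper's. The paper (Section \ref{section_4}) proves Theorem \ref{coconn} by placing $\Phi$ in the commutative diagram of Theorem \ref{diagram} and showing that the colim/holim interchange maps $s_1$ and $s_2$ are weak equivalences: $s_1$ by citing Mitchell's result on interchanging filtered colimits and homotopy limits of bounded-above cosimplicial spectra, and $s_2$ via the equivalence $X^{hG} \simeq \holim_\Delta \mathrm{Map}^c(G^\bullet, X_{fG})$. The paper never invokes $\Psi$ in this proof, and it encodes the isomorphism $\colim_N H^s(G/N, -) \cong H^s_c(G, -)$ at the level of cosimplicial spectra (the bottom rows of Theorem \ref{diagram}) rather than on $E_2$-terms. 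Your approach works directly with the two families of homotopy fixed point spectral sequences, which is more explicit but requires the ``inflation unwinding'' of the $\Phi$--$\Psi$ zigzag that you correctly flag as the delicate step; the paper avoids that unwinding entirely by staying in the cosimplicial framework. Both arguments ultimately rest on the same algebraic identification and on bounded-above-ness forcing the relevant filtrations (or the interchange) to be well-behaved.

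One claim to tighten: you assert that the bounded-above hypothesis enters ``here and only here,'' namely to ensure strong convergence. It actually enters a second time, and earlier. The descent spectral sequence $H^s_c(G, \pi_t X) \Rightarrow \pi_{t-s}(X^{hG})$ is \emph{not} automatically available for an arbitrary profinite $G$ and arbitrary discrete $G$-spectrum $X$: one first needs the equivalence $X^{hG} \simeq \holim_\Delta \mathrm{Map}^c(G^\bullet, X_{fG})$, so that the homotopy spectral sequence of the right-hand side abuts to $\pi_\ast(X^{hG})$ and not merely to the homotopy of some other homotopy limit. The paper establishes that equivalence for bounded-above $X$ by citing \cite[page 911, especially condition (iii)]{nyjm2nd} and \cite[proof of Theorem 3.2.1]{joint}; your argument needs the same input, and so does your Eilenberg--MacLane alternative (it is exactly what identifies $\pi_{-s}((\Sigma^t HM)^{hG})$ with $H^s_c(G,M)$). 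With that ingredient made explicit, the rest of your proposal --- uniformly finite filtrations, exactness of filtered colimits of abelian groups, and the isomorphism $\colim_N H^s(G/N,-) \cong H^s_c(G,-)$ --- does go through as you describe.
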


In Section \ref{section_4}, after giving a general tool (Theorem \ref{diagram}) that helps with deciding when $\Phi$ is a weak equivalence, we give the proof of Theorem \ref{coconn}.

\begin{Thm}\label{vanishing}
Let $G$ be a profinite group and let $X$ be a discrete $G$-spectrum with trivial $G$-action. Suppose that $\{U\}$ is a collection of open normal subgroups of $G$ that is cofinal in $\{N\}$, the collection of 
all open normal subgroups of $G$, such that 
for each $U$ and every $t \in \mathbb{Z}$, $H^s_c(U, \pi_t(X)) = 0$, for all $s > 0$. 
Then each of the following holds: 
\begin{enumerate} 
\item[(a)]
the map $\Phi$ is a weak equivalence$\mspace{1mu}\mathrm{;}$ 
\item[(b)]
there is an equivalence 
$X^{hG} \simeq \colim_N X^{hG/N};$ and
\item[(c)] 
for each $U,$ there is $\mathrm{(1)}$ an equivalence
$X^{hG/U} \simeq X^{hG}$ and
\item[{}]
$\mathrm{(2)}$ a weak equivalence 
$X \xrightarrow{\,\simeq\,} X^{hU}.$
\end{enumerate}
\end{Thm}

For this result, we only need to prove 
(a) and (c), and this is done in Section 
\ref{section_vanishing}. 

In Theorem \ref{vanishing}, suppose that $G$ is finite: above, we 
noted that this hypothesis and the theorem's first sentence are 
enough to yield conclusions (a) and (b); if $U'$ is any normal 
subgroup of $G$ with $H^s(U', \pi_\ast(X)) = 0$ for $s > 0$, then 
it is well-known that the homotopy 
fixed point spectral sequence 
\[H^s(U', \pi_t(X)) \Longrightarrow \pi_{t-s}(X^{hU'})\] 
collapses, with 
\[\pi_\ast(X^{hU'}) \cong H^0(U', \pi_\ast(X)) = (\pi_\ast(X))^{U'} = \pi_\ast(X),\] so that 
$X \simeq X^{hU'}$; also, there is an 
equivalence $X^{hG} \simeq (X^{hU'})^{hG/U'}$ (for example, see 
\cite[Lemma 10.5]{DwyerWilkerson} and \cite[Proposition 3.3.1]{joint}). Thus, 
when $G$ is finite, the conclusions of Theorem \ref{vanishing}  
are not surprising. However, as mentioned earlier, in this result -- and in 
its corollaries and examples -- we are mostly interested in the 
case when $G$ is infinite, and we do not mean to claim that the 
conclusions given by our results when $G$ is finite originate with this paper. The above spectral sequence argument 
is used in the proof of Theorem \ref{vanishing}, but when $G$ is infinite, it 
takes some work to know that the homotopy fixed point 
spectral sequence exists. 

\subsection{Corollaries and examples of the second main result}\label{1.2}

\begin{Cor}\label{K(0)corollary} 
If $G$ is any profinite group and $X$ is a discrete $G$-spectrum with trivial $G$-action, such that for each $t \in \mathbb{Z}$, 
$\pi_t(X)$ is a torsion-free divisible abelian group, then $\Phi$ is a 
weak equivalence and there is an equivalence $X^{hG} \simeq X^{hG/N}$ and a weak equivalence 
$X \xrightarrow{\,\simeq\,} X^{hN},$ 
where $N$ is any open normal subgroup of $G$. 
\end{Cor}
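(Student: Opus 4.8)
The plan is to deduce this directly from Theorem \ref{vanishing} by verifying its cohomological hypothesis for the collection $\{U\}$ consisting of \emph{all} open normal subgroups of $G$. So the heart of the matter is the following purely algebraic claim: if $A$ is a torsion-free divisible abelian group, equipped with trivial $U$-action for a profinite group $U$, then $H^s_c(U, A) = 0$ for all $s > 0$. Granting this, apply it to $A = \pi_t(X)$ for each $t \in \mathbb{Z}$ and each open normal subgroup $U$ of $G$; then $\{U\} = \{N\}$ is trivially cofinal in itself, so Theorem \ref{vanishing}(a),(b),(c) apply verbatim and give exactly the stated conclusions (the weak equivalence $\Phi$, the equivalence $X^{hG} \simeq \colim_N X^{hG/N}$, and, for each open normal $N$, the equivalence $X^{hG} \simeq X^{hG/N}$ together with the weak equivalence $X \xrightarrow{\,\simeq\,} X^{hN}$). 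Note $X^{hG} \simeq \colim_N X^{hG/N} \simeq X^{hG/N}$ for any fixed $N$ follows since the transition maps in the colimit are equivalences by part (c)(1).

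For the algebraic claim, the key observation is that a torsion-free divisible abelian group is the same thing as a $\mathbb{Q}$-vector space, hence is an injective $\mathbb{Z}$-module, and in particular is a direct sum (equivalently, since it has trivial action, it suffices to treat $\mathbb{Q}$ itself and then pass to arbitrary direct sums using that continuous cohomology of a profinite group commutes with filtered colimits of discrete modules). Then one reduces to computing $H^s_c(U, \mathbb{Q})$. The cleanest route: $\mathbb{Q}$ with trivial action is a $\mathbb{Q}$-module, and $U$ being profinite, its continuous cohomology with $\mathbb{Q}$-vector-space coefficients vanishes in positive degrees because $H^s_c(U,-) \cong \colim_V H^s(U/V, -)$ over open normal $V \trianglelefteqslant U$ (the finite-level isomorphism recalled in the introduction), and for a \emph{finite} group $U/V$ acting trivially on a $\mathbb{Q}$-vector space $M$ one has $H^s(U/V, M) = 0$ for $s > 0$ since the order of the group is invertible in $M$ (the standard transfer/averaging argument: multiplication by $|U/V|$ is both an isomorphism on $M$ and zero on $H^{>0}$). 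Taking the colimit over $V$ gives $H^s_c(U, M) = 0$ for $s > 0$, and in particular for $M = \pi_t(X)$.

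The main obstacle — really the only point requiring care — is the interchange of continuous cohomology with the colimit, i.e.\ justifying $H^s_c(U, \pi_t(X)) \cong \colim_V H^s(U/V, \pi_t(X))$; but this is precisely the isomorphism the introduction already invokes (``$H^*_c(G, \pi_t(X)) \cong \colim_N H^*(G/N, \pi_t(X))$, valid for any $X$ in $\Sigma\mathrm{Sp}_G$ with trivial action''), applied with $G$ replaced by the open subgroup $U$ and with its own open normal subgroups in place of $\{N\}$; alternatively one cites the standard fact that continuous cohomology of a profinite group commutes with filtered colimits of discrete modules and that each $\pi_t(X)$, having trivial action, is the filtered colimit of its finitely generated — hence here, being torsion-free divisible, finite-dimensional $\mathbb{Q}$-vector-space — submodules with trivial action. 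Everything else is the formal bookkeeping of feeding the verified hypothesis into Theorem \ref{vanishing}.
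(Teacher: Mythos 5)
Your proposal is correct and follows essentially the same route as the paper: invoke Theorem \ref{vanishing} with $\{U\}$ taken to be the full collection $\{N\}$ of open normal subgroups. The only difference is that where you supply a self-contained proof of $H^s_c(N,\pi_t(X))=0$ for $s>0$ (via $H^s_c(N,M)\cong\colim_V H^s(N/V,M)$ and the averaging argument for finite groups acting on $\mathbb{Q}$-vector spaces), the paper simply cites \cite[Corollary 6.7.5]{Ribes} for this vanishing.
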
 

\begin{proof}
For all $N$ and $t \in \mathbb{Z}$, $H^s_c(N, \pi_t(X)) = 0$, when $s > 0$, by \cite[Corollary 6.7.5]{Ribes}. Hence, 
the hypotheses of Theorem \ref{vanishing} are satisfied with 
``$\{U\}$" equal to $\{N\}$.
\end{proof} 

\begin{Ex}\label{K(0)example}
Let $H\mathbb{Q}$ be the Eilenberg-Mac Lane spectrum for the rationals $\mathbb{Q}$ and let $G$ be a profinite group. Also, 
let $X$ be an $H\mathbb{Q}$-module (in the sense of \cite[Section 5.4]{HSS}) that is regarded as a 
discrete $G$-spectrum with trivial action. Then for 
every integer $t$, $\pi_t(X)$ is a unitary $\mathbb{Q}$-module, so that 
its underlying abelian group is torsion-free divisible. Thus, the 
conclusions of Corollary \ref{K(0)corollary} 
hold, giving that for every profinite 
group $G$, there is a weak equivalence 
$X \xrightarrow{\,\simeq\,} X^{hG}$. In particular, there is a 
weak equivalence 
$H\mathbb{Q} \xrightarrow{\,\simeq\,} (H\mathbb{Q})^{hG}$.
\end{Ex}

We point out that if $G$ is a finite group and 
$M$ is a simply connected rational $G$-space, then 
relationships between $M$, $M^{hG}$, and 
$(\pi_\ast(M))^G$ are studied in the thesis \cite{Goyo} (see also \cite[Appendix A]{BuijsEtAl}). For a result about simply connected pointed simplicial discrete $G$-sets, where $G$ is an arbitrary profinite group, rational localization, and fixed points, see \cite[Theorem 8.1 and the sentence after it]{hGal} (the second part of this citation considers the case when the $G$-action is trivial). 

\begin{Rk}\label{HQ-module}
Let $X$ be as in Corollary \ref{K(0)corollary} and let $\eta \: S^0 \to 
H\mathbb{Q}$ be the unit of $H\mathbb{Q}$. Then $\pi_\ast(X)$ is a 
$\mathbb{Q}$-vector space in each degree and it is a familiar fact 
that this implies that the composition
\[X \xrightarrow{\,\cong\,} S^0 \wedge X \xrightarrow{\,\eta \wedge \mathrm{id}_X\,} H\mathbb{Q} \wedge X\] 
is a weak equivalence of spectra (for example, see the argument in 
\cite[proof of Corollary 2.16: 2nd paragraph]{ShipleyHZ}). Thus, $X$ is weakly equivalent to the 
$H\mathbb{Q}$-module $H\mathbb{Q} \wedge X$.  
\end{Rk}

We tie together Corollary \ref{K(0)corollary}, 
Example \ref{K(0)example}, and Remark \ref{HQ-module} with the 
following immediate result. 
\begin{Cor}\label{ties_together}
If $X$ is a spectrum with $\pi_\ast(X)$ degreewise torsion-free divisible and $G$ is any profinite group, then when $X$ and $H\mathbb{Q} \wedge X$ are regarded as discrete $G$-spectra with trivial $G$-action, the map $X \xrightarrow{\,\simeq\,} H\mathbb{Q} \wedge X$ is a weak equivalence in $\Sigma\mathrm{Sp}_G$ and there are weak equivalences 
\[X \xrightarrow{\,\simeq\,} X^{hG} \xrightarrow{\,\simeq\,} 
(H\mathbb{Q} \wedge X)^{hG} \xleftarrow{\,\simeq\,} H\mathbb{Q} \wedge X.\]
\end{Cor}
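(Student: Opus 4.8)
The plan is to observe that Corollary \ref{ties_together} is a purely formal consequence of the three preceding items, so the real work has already been done and all that remains is to assemble it.

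First I would identify the map in question: it is the composite
\[\iota \: X \xrightarrow{\,\cong\,} S^0 \wedge X \xrightarrow{\,\eta \wedge \mathrm{id}_X\,} H\mathbb{Q} \wedge X\]
of Remark \ref{HQ-module}. Since, for an abelian group, being torsion-free divisible is the same as being a $\mathbb{Q}$-vector space, the hypothesis that $\pi_\ast(X)$ is degreewise torsion-free divisible is precisely the hypothesis of Remark \ref{HQ-module}, so $\iota$ is a weak equivalence of spectra. Its source and target both carry the trivial $G$-action, with respect to which $\iota$ is $G$-equivariant, hence a morphism in $\Sigma\mathrm{Sp}_G$; because the weak equivalences of $\Sigma\mathrm{Sp}_G$ are exactly the maps that are weak equivalences of underlying spectra, $\iota$ is a weak equivalence in $\Sigma\mathrm{Sp}_G$. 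This is the first assertion of the corollary.

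Next I would apply the homotopy fixed point functor. Since $(-)^{hG}$ is the right derived functor of the right Quillen functor $(-)^G \: \Sigma\mathrm{Sp}_G \to Sp^\Sigma$, it sends the weak equivalence $\iota$ to a weak equivalence $X^{hG} \xrightarrow{\,\simeq\,} (H\mathbb{Q} \wedge X)^{hG}$. It then remains only to see that each of $X$ and $H\mathbb{Q}\wedge X$ is weakly equivalent to its own $G$-homotopy fixed points, via the natural map from $Y = Y^G$ to $(Y_{fG})^G = Y^{hG}$. For $X$ this follows from Corollary \ref{K(0)corollary} together with the reasoning recalled in Example \ref{K(0)example}, which yields the weak equivalence $X \xrightarrow{\,\simeq\,} X^{hG}$. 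For $H\mathbb{Q}\wedge X$, note that $\iota$ identifies $\pi_\ast(H\mathbb{Q}\wedge X)$ with $\pi_\ast(X)$, so it too is degreewise torsion-free divisible (equivalently, $H\mathbb{Q}\wedge X$ is an $H\mathbb{Q}$-module, the situation of Example \ref{K(0)example}), and the same argument gives $H\mathbb{Q}\wedge X \xrightarrow{\,\simeq\,} (H\mathbb{Q}\wedge X)^{hG}$.

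Finally I would splice these together into
\[X \xrightarrow{\,\simeq\,} X^{hG} \xrightarrow{\,\simeq\,} (H\mathbb{Q}\wedge X)^{hG} \xleftarrow{\,\simeq\,} H\mathbb{Q}\wedge X,\]
optionally remarking that naturality in $Y$ of the map $Y \to Y^{hG}$ makes the square relating $\iota$ to its image under $(-)^{hG}$ commute, so the zigzag is compatible with $\iota$ (though the statement asserts only the existence of the four weak equivalences). I do not anticipate a genuine obstacle: as the surrounding text indicates, the result is immediate, and the only two points that need a word — transporting the equivalence $\iota$ from $Sp^\Sigma$ to $\Sigma\mathrm{Sp}_G$, and checking that $\pi_\ast(H\mathbb{Q}\wedge X)$ inherits the torsion-free-divisible property — are routine.
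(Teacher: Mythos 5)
Your proof is correct and matches the paper's (implicit) approach: the paper presents Corollary~\ref{ties_together} as an immediate consequence of Corollary~\ref{K(0)corollary}, Example~\ref{K(0)example}, and Remark~\ref{HQ-module}, and your argument simply spells out that deduction (including the routine points you flag, namely transporting the underlying weak equivalence into $\Sigma\mathrm{Sp}_G$ and noting that $H\mathbb{Q}\wedge X$ has degreewise torsion-free divisible homotopy, plus the tacit choice $N = G$ when invoking Corollary~\ref{K(0)corollary}).
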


Now we take Theorem \ref{vanishing} in a different direction, where the 
primary focus is on torsion in $\pi_\ast(X)$. 
Let $J$ be a nonempty subset ($J$ can be infinite) 
of the primes. Following standard practice, we say that 
an abelian group $A$ is a {\em $J$-torsion group} if each nonzero 
element $a \in A$ 
has finite order $n_a$, such that if $p$ is a prime that divides $n_a$, then $p \in J$. The {\em order} of a profinite group $G$, $\#G$, is the supernatural number defined as the least common multiple of the 
set $\{\mspace{1mu} |G/{N'}| \mid N' \in \{N\}\}$ (see, for example, 
\cite[Section 2.3]{Ribes}). Given $G$, we write the map ``$\Phi$" as ``$\Phi_G$" 
when we need to emphasize the role of $G$ as the ``ambient group" for 
the construction of $\Phi$. 


\begin{Cor}\label{set_primes}
Let $J$ be any nonempty subset of the 
primes and let $G$ be a profinite 
group, such that if $p \in J$, then $p$ does not divide $\# G$. 
Also, let $X$ be a discrete $G$-spectrum with trivial action, such that for each $t \in \mathbb{Z}$, $\pi_t(X)$ is isomorphic as an abelian group to the direct sum of a torsion-free divisible abelian group and a $J$-torsion group. Then for any closed subgroup $H$ of $G$, 
the map $\Phi_H$ is a weak equivalence and 
there are equivalences
\[\colim_{N_H} X^{hH/{N_H}} \simeq X^{hH} \xleftarrow{\,\simeq\,} 
X \xrightarrow{\,\simeq\,} X^{hN'_H} \simeq X^{hH/{N'_H}},\] where the colimit is indexed over 
the collection $\{{N_H}\}$ of all open normal subgroups of $H$
and ${N'_H}$ is any particular open normal subgroup of $H$.
\end{Cor}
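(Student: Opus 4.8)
The plan is to apply Theorem \ref{vanishing}, not to $G$ itself but to the closed subgroup $H$, taking for ``$\{U\}$'' the collection $\{N_H\}$ of all open normal subgroups of $H$ (which is trivially cofinal in itself). First I would restrict the $G$-action on $X$ to $H$; since the action is trivial this gives a discrete $H$-spectrum with trivial action whose homotopy groups $\pi_t(X)$ are still, degreewise and as $H$-modules with trivial action, a direct sum $D_t \oplus T_t$ of a torsion-free divisible group $D_t$ and a $J$-torsion group $T_t$. To invoke Theorem \ref{vanishing} I then need $H^s_c(N_H, \pi_t(X)) = 0$ for every $N_H$, every $t \in \mathbb{Z}$, and every $s > 0$; I would in fact prove the uniform statement that $H^s_c(K, \pi_t(X)) = 0$ for $s > 0$ whenever $K$ is a closed subgroup of $G$, since every open normal subgroup $N_H$ of $H$ is closed in $H$ and hence in $G$, and since the finite quotients $H/N_H$ appearing below (regarded as profinite groups with the discrete topology) are covered by this as well.

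For that vanishing I would argue as follows. If $K \leqslant G$ is closed then $\#K$ divides $\#G$ as supernatural numbers (see \cite[Section 2.3]{Ribes}), so no prime in $J$ divides $\#K$; equivalently, the finite index $|K/K'|$ is prime to each $p \in J$ for every open normal subgroup $K'$ of $K$. Since continuous cohomology commutes with finite direct sums in the coefficients, $H^s_c(K, \pi_t(X)) \cong H^s_c(K, D_t) \oplus H^s_c(K, T_t)$. The first summand vanishes for $s > 0$ by \cite[Corollary 6.7.5]{Ribes}, exactly as in the proof of Corollary \ref{K(0)corollary}. For the second I would use $H^s_c(K, T_t) \cong \colim_{K'} H^s(K/K', T_t)$ (trivial action): for $s > 0$, each group $H^s(K/K', T_t)$ is annihilated by $|K/K'|$ by the standard corestriction--restriction argument valid for an arbitrary coefficient module, and it is itself a $J$-torsion group, being a subquotient of a finite direct sum of copies of $T_t$; a $J$-torsion group killed by an integer prime to every member of $J$ is zero, so $H^s(K/K', T_t) = 0$ and hence $H^s_c(K, T_t) = 0$. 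I expect this step to be the only real obstacle; the delicate point is that $H^s(K/K', T_t)$ stays $J$-torsion even when $T_t$ is infinite, which works because over the finite group $K/K'$ the relevant cochain groups are finite products, hence finite direct sums, of copies of $T_t$.

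With the hypothesis of Theorem \ref{vanishing} verified for $H$, I would read off the conclusions. Part (a) gives that $\Phi_H$ is a weak equivalence. Part (b) gives $\colim_{N_H} X^{hH/{N_H}} \simeq X^{hH}$, the leftmost equivalence of the display. Part (c) applied with ``$U$'' $= H$ gives, via $X^{hH/H} \simeq X$, a weak equivalence $X \xrightarrow{\,\simeq\,} X^{hH}$, while part (c) applied with ``$U$'' $= N'_H$ gives a weak equivalence $X \xrightarrow{\,\simeq\,} X^{hN'_H}$ together with an equivalence $X^{hH/{N'_H}} \simeq X^{hH}$. Combining $X^{hH/{N'_H}} \simeq X^{hH}$ with $X \xrightarrow{\,\simeq\,} X^{hH}$ yields $X \simeq X^{hH/{N'_H}}$, and pairing this with $X \xrightarrow{\,\simeq\,} X^{hN'_H}$ yields $X^{hN'_H} \simeq X^{hH/{N'_H}}$. (If one prefers not to treat $H$ as an improper open normal subgroup of itself, the weak equivalence $X \xrightarrow{\,\simeq\,} X^{hH/{N'_H}}$ follows instead from the collapse of the homotopy fixed point spectral sequence for the finite group $H/{N'_H}$ acting trivially on $X$, as recalled after Theorem \ref{vanishing}, since $H^s(H/{N'_H}, \pi_t(X)) = 0$ for $s > 0$ by the computation above.) Chaining these equivalences produces the displayed string of weak equivalences, completing the proof.
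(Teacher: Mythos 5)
Your proposal is correct and follows essentially the same route as the paper: restrict to $H$, take $\{U\} = \{N_H\}$, split $\pi_t(X) \cong D_t \oplus T_t$, kill $H^s_c(N_H, D_t)$ via \cite[Corollary 6.7.5]{Ribes}, kill $H^s_c(N_H, T_t)$ using coprimality of $\#N_H$ with $J$, and then read off all conclusions from Theorem \ref{vanishing}. The only difference is a single step: where the paper cites \cite[Lemma 10.2.1]{Wilson} to get $H^s_c(N_H, T_t) \hookrightarrow H^s_c(1, T_t) = 0$ directly, you re-derive that vanishing from scratch via $H^s_c(K,T_t) \cong \colim_{K'} H^s(K/K', T_t)$ together with the restriction--corestriction annihilation by $|K/K'|$ and the observation that $H^s(K/K',T_t)$ is still $J$-torsion; this is a correct, more self-contained substitute for the cited lemma.
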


\begin{proof}
Choose any ${N_H}$, $s \geq 1$, and $t \in \mathbb{Z}$. 
There are discrete $G$-modules with trivial $G$-action, $D$ and $T$, 
torsion-free divisible and $J$-torsion, respectively, such that 
\[H^s_c(N_H, \pi_t(X)) \cong 
H^s_c(N_H, D \oplus T)
\cong H^s_c(N_H, D) \oplus H^s_c(N_H, T) 
\cong H^s_c(N_H, T).\] 
For $p \in J$, since $p \nmid \#G = [G : N_H](\#N_H)$, $p \nmid \#N_H$. By \cite[Lemma 10.2.1]{Wilson}, the map $H^s_c(N_H,T) \xrightarrow{\,\mathrm{res}\,} H^s_c(1,T) = 0$ is injective, giving $H^s_c(N_H,\pi_t(X)) = 0$. Thus, Theorem \ref{vanishing} applies 
with ``$G$" 
and ``$\{U\}$" equal to $H$ and $\{{N_H}\}$, 
respectively.
\end{proof}

\begin{Ex}\label{multipleK(n)s}
Recall from Section \ref{intro} that if $p$ is a prime and $n \geq 0$, we let $K(n,p)$ denote the $n$th 
Morava $K$-theory spectrum (usually written as $K(n)$): $K(0, p) = H\mathbb{Q}$ and for $n \geq 1$, $\pi_\ast(K(n,p)) = \mathbb{F}_p[v_n, v_n^{-1}]$, where $\mathbb{F}_p$ is 
the field with $p$ elements and the degree of $v_n$ is $2(p^n-1)$. 
Now let there be the following sets: $P$, the set of all primes; $J$, a nonempty proper subset of $P$; $L$, a nonempty subset of $P-J$; and 
$P'$, a subset of $P$ that can be empty. We define $J' := P' \,\dotcup\, J$ 
as the disjoint union of $P'$ and $J$: if $p \in P' \cap J$, then $p$ appears twice, as distinct elements, in $J'$. 
Let \[G = \textstyle{\prod_{\ell \in L} \mathbb{Z}_\ell,}\] where $\mathbb{Z}_\ell$ is the $\ell$-adic integers. Since $\#G = \prod_{\ell \in L} \ell^\infty$, if $p \in J$, then $p$ does not divide $\#G$. For each $p \in J'$, if $p \in P'$, 
let $n_p = 0$, and if $p \in J$, let $n_p$ be a positive integer (if $p \in P' \cap J$, then $n_p$ for $p \in P'$ and $n_p$ for $p \in J$ are distinct 
elements in $\{n_q \mid q \in J'\}$). Let \[\textstyle{X = \bigvee_{p \in J'} K(n_p, p)}\] be a discrete $G$-spectrum with trivial action. 
For each integer $t$, 
there is the isomorphism $\pi_t(X) \cong D_t \oplus T_t$, where 
\[D_t = \bigoplus_{p \in P'} \pi_t(K(0, p)) 
= \begin{cases} \textstyle{\bigoplus_{p \in P'} \mathbb{Q}}, & \text{if} \ t=0;\\ \{e\}, & \text{if} \ t \neq 0\end{cases}
\ \ \ \ \mathrm{and} \ \ \ T_t = \bigoplus_{p \in J} \pi_t(K(n_p, p))\] are torsion-free divisible and $J$-torsion, respectively (if $P' = \varnothing$, 
then 
$D_t 
= \{e\}$). Then $J$, $G$, and $X$ satisfy the hypotheses of Corollary \ref{set_primes}, so that, 
for example, there are weak equivalences
\[\textstyle{\bigl(\mspace{1mu}\bigvee_{p \in J'} K(n_p, p)\bigr)^{h(\prod_{\ell \in L} \mathbb{Z}_\ell)} 
\xleftarrow{\,\simeq\,} \bigvee_{p \in J'} K(n_p, p) \xrightarrow{\,\simeq\,} \bigl(\mspace{1mu}\bigvee_{p \in J'} K(n_p, p)\bigr)^{hH}},\] where $H$ is any closed 
subgroup of $G$, and, as the special case 
for when $p$ and $\ell$ are distinct primes and $n$ is any positive integer, by setting $J = J' = \{p\}$, $L = \{\ell\}$, 
and $n_p = n$, we find that there is a weak equivalence \[K(n) \xrightarrow{\,\simeq\,} 
K(n)^{h\mathbb{Z}_\ell}.\] 
\end{Ex}

In the above applications of Theorem \ref{vanishing}, 
we always let ``$\{U\}$" in the hypotheses of the theorem be equal to the entire collection $\{N\}$ of open 
normal subgroups of ``$G$" (for example, in the proof of 
Corollary \ref{set_primes}, ``$G$" and ``$\{U\}$" equal $H$ and 
$\{N_H\}$, respectively). Below, we give an example where ``$\{U\}$" 
must be a proper subset of $\{N\}$. 
To give this example, we need the following notation and 
recollection. 

Given a spectrum $Z$, let the morphism 
$Z \xrightarrow{\,\simeq\,} Z_f$ be a functorial fibrant replacement 
in $Sp^\Sigma$: this map is a trivial cofibration and $Z_f$ is fibrant, 
in $Sp^\Sigma$. If $W$ is a simplicial set, 
then we let $F(W_+, Z)$ denote the power 
spectrum $Z^{W_+}$ of \cite[Section 1.3]{HSS}, where $W_+$ is 
$W$ with a disjoint basepoint added. 
By \cite[pages 163-165]{HSS}, $F(W_+, Z)$ is the 
function spectrum $\mathrm{Hom}_S(F_0(W_+), Z)$, which is 
often written as $F(\Sigma^\infty W_+, Z)$. Then if 
$K$ is a finite group and 
$Z$ is regarded as a $K$-spectrum with trivial $K$-action, we recall 
(for example, see \cite[10.2]{DwyerWilkerson}) that 
there is an equivalence
\[Z^{hK} \simeq F(BK_+, Z_f),\] where the simplicial set $BK$ is the 
classifying space of $K$.

\begin{Ex}\label{proper_sub}
Let $p$ and $\ell$ be distinct primes, with $n, r \geq 1$, and 
set \[G = 
\mathbb{Z}_\ell \times (\mathbb{Z}/{(p^r\mathbb{Z})})\] and 
$X = K(n)$, 
regarded as a discrete $G$-spectrum with trivial action. Also, 
for each $m \geq 0$, let $\ell^m\mathbb{Z}_\ell$ 
denote the open normal subgroup $(\ell^m\mathbb{Z}_\ell) \times \{e\}$ of $G$ 
and set \[\{U\} = \{\ell^m\mathbb{Z}_\ell \mid m \geq 0\}.\] Again, let 
$m \geq 0$: since 
$\pi_\ast(K(n))$ is degreewise $\{p\}$-torsion, 
$\ell^m\mathbb{Z}_\ell \cong \mathbb{Z}_\ell$, and 
$p$ does not divide $\#(\ell^m\mathbb{Z}_\ell) = \ell^\infty$, 
\cite[Lemma 10.2.1]{Wilson} gives that 
\[H^s_c(\ell^m\mathbb{Z}_\ell, \pi_\ast(K(n))) = 0, \ \ \ \text{for all} 
\ s > 0.\] Thus, $G$, $X$, and $\{U\}$ satisfy the hypotheses of Theorem 
\ref{vanishing}, so that, for example,
\[K(n)^{hG} \simeq 
K(n)^{h((\mathbb{Z}_\ell \times (\mathbb{Z}/{(p^r\mathbb{Z})}))/((\ell^0\mathbb{Z}_\ell) \times \{e\}))},\] which 
yields the equivalence 
\[K(n)^{h(\mathbb{Z}_\ell \times (\mathbb{Z}/{(p^r\mathbb{Z})))}}
\simeq K(n)^{h(\mathbb{Z}/{(p^r\mathbb{Z})})}.\] 
Furthermore, since 
\begin{align*} 
\pi_{-\ast}(K(n)^{h(\mathbb{Z}/{(p^r\mathbb{Z})})}) & \cong 
\pi_{-\ast}(F(B(\mathbb{Z}/{(p^r\mathbb{Z})})_+, K(n)_f)) 
\cong K(n)^{\ast}(B(\mathbb{Z}/{(p^r\mathbb{Z})})) 
\\ & \cong K(n)^\ast[x]/(x^{p^{nr}}),
\end{align*} 
where 
$K(n)^\ast = \pi_{-\ast}(K(n))$ and $x \in K(n)^2(\mathbb{C}P^{\infty})$, 
is a free $K(n)^\ast$-module of rank $p^{nr} > 1$ (when $p$ is 
odd, see \cite[Theorem 5.7; page 745]{RavenelWilson}; for $p = 2$, 
see \cite{RavenelFiniteGroups} and \cite[Theorem 1.2]{KuhnTransactions}; for all primes $p$, the succinct presentation 
of this computation in \cite[Proposition 2.3]{SanFeliu} is helpful), 
$\pi_\ast(K(n)^{h(\mathbb{Z}/{(p^r\mathbb{Z})})})$ and 
$\pi_\ast(K(n))$ are not isomorphic as graded abelian groups, and hence, 
there is no equivalence between $K(n)^{hG}$ and $K(n)$. Thus, if the 
hypotheses of Theorem \ref{vanishing} hold for $G$, $X$, and 
some $\{U\}$, where here, $\{U\}$ need not be the collection 
$\{\ell^m\mathbb{Z}_\ell \mid m \geq 0\}$ used above, $G$ 
does not belong to $\{U\}$.  
\end{Ex}
\subsection{Two consequences of (\ref{corollary}), our motivation for studying $\Phi$, and some technical comments}\label{1.3} Now let $G$ be any profinite group, let $X$ be any object 
in $\Sigma\mathrm{Sp}_G$ that has trivial $G$-action, and suppose that $\Phi$ is a weak equivalence. 
Then an interesting consequence of the equivalence 
$X^{hG} \simeq \colim_N X^{hG/N}$ from (\ref{corollary}) is that it implies that 
the Spanier-Whitehead dual of 
$X^{hG}$, $D(X^{hG})$, satisfies
\[D(X^{hG}) = F(X^{hG}, S^0) \simeq \holim_N F(X^{hG/N}, S^0),\] 
where here, $S^0$ is a fixed cofibrant and fibrant model for the sphere 
spectrum. 

We continue to make the assumptions of the previous paragraph, to 
point out another form of the equivalence in (\ref{corollary}). Let the morphism
\[\eta \: X \xrightarrow{\,\simeq\,} X_f\] be a functorial fibrant replacement 
in $Sp^\Sigma$. 
Since when $K$ is a finite group and 
$X$ is also a $K$-spectrum with trivial action, 
there is an equivalence
$X^{hK} \simeq F(BK_+, X_f),$ the equivalence in (\ref{corollary}) 
implies that 
\begin{equation}\label{colimit}\zig
X^{hG} \simeq \colim_N F(B(G/N)_+, X_f),
\end{equation} 
giving an explicit 
presentation of $X^{hG}$ in terms of fundamental building blocks. 

Now we describe part of our motivation for studying $\Phi$. Let 
$n \geq 1$, let $p$ be any prime, set $\mathbb{S}_n$ equal 
to the $n$th Morava stabilizer group, and let 
\[\mathbb{S}_n = \Gamma_0 \gneq \Gamma_1 \gneq \cdots 
\gneq \Gamma_m \gneq \Gamma_{m+1} \gneq \cdots\] be a cofinal collection of open 
normal subgroups of $\mathbb{S}_n$. 
In the talk ``Equivariant dual of Morava $E$-theory" about joint work with Agn\`{e}s Beaudry, Paul Goerss, 
and Vesna Stojanoska (this talk was July 18, 2017, at the University 
of Illinois at Urbana-Champaign; a video of this talk is available online 
at YouTube), Mike Hopkins considered ``$(L_{K(n)}S^0)^{h\Gamma_m}$," the homotopy fixed points of the $K(n)$-local sphere 
$L_{K(n)}S^0$ with respect to the trivial action of $\Gamma_m$, 
and Hopkins wrote this object as ``$D(B{\Gamma_m}_+)$," which, 
when this notation is taken literally in the context of the talk, is 
equal to $F(\Sigma^\infty B{\Gamma_m}_+, L_{K(n)}S^0)$, the $K(n)$-local Spanier-Whitehead dual of $\Sigma^\infty B{\Gamma_m}_+$ (we are not saying that this definition of $D(B{\Gamma_m}_+)$ is 
what was meant by Hopkins). (For part of what is discussed in this paragraph and the next one, a slightly different 
presentation from the one given by Hopkins in his talk is in 
\cite[pages 54-55]{BarthelBeaudry}.)

Let $\mathbb{G}_n$ be the extended Morava stabilizer group and let 
$E_n$ be the Lubin-Tate spectrum (see \cite{DH}). In his talk, 
Hopkins 
explained how the $D(B{\Gamma_m}_+)$'s play a role in better 
understanding the $\mathbb{G}_n$-action on 
$F(E_n, L_{K(n)}S^0)$ that comes from the $\mathbb{G}_n$-action on $E_n$ (see \cite[Proposition 16]{Str}). It is the author's understanding that the construction ``$(L_{K(n)}S^0)^{h\Gamma_m}$" is 
not formed by regarding 
$L_{K(n)}S^0$ as a discrete $\Gamma_m$-spectrum with 
trivial action and then applying $((-)_{f\Gamma_m})^{\Gamma_m}$ 
to it, and so 
further consideration of this construction is outside the scope of this 
paper. But the notation $D(B{\Gamma_m}_+)$ and the way 
classifying spaces related to $G$ appear in (\ref{colimit}) was one of the main 
motivations of the author to study the map $\Phi$. 

Above and elsewhere in this paper, by ``$\holim$," we mean the 
homotopy limit given by 
\cite[Definition 18.1.8]{Hirschhorn}. 

This paragraph can be ignored by the reader who only wants the 
main ideas of this paper. As explained in Definition \ref{definition_phi} 
and Remark 
\ref{remark_collection} regarding ``the map $\Phi$," there is a map 
$\Phi = \Phi_{\{U\}}$ for 
every cofinal subcollection $\{U\}$ of $\{N\}$, and as explained 
in Remark \ref{remark_collection}, all of these maps are 
``the same one map, up to isomorphism" in a precise sense and 
if any one of them is a weak equivalence, then all of them are weak equivalences. We have chosen this terminology and notation to 
make the Introduction easier to read and for flexibility with proofs 
(including proofs of future results). 

Throughout this Introduction and everywhere else in this paper, if 
$Z$ is any spectrum, then by $\pi_t(Z)$, where $t \in \mathbb{Z}$, 
we mean the homotopy groups $[S^t, Z]$ of morphisms in the 
homotopy category of $Sp^\Sigma$, where here, $S^t$ denotes a 
fixed cofibrant and fibrant model for the $t$-th suspension of the sphere spectrum. 

\subsection*{Acknowledgements} I thank Thomas Credeur and Philip Hackney for helpful conversations. 

\section{The construction of $\Phi$, with target equivalent to $X^{hG}$}\label{map_phi} 
Let $G$ be any profinite group, let $X$ be a discrete $G$-spectrum with trivial $G$-action, and let the map
$\eta \: X \xrightarrow{\,\simeq\,} X_f$ be a functorial fibrant replacement in 
$Sp^\Sigma$. 
Thus, by functoriality, the map $\eta$ is $G$-equivariant and the $G$-action on $X_f$ is trivial. We regard $X_f$ as a discrete $G$-spectrum, and it follows that 
the map $\eta$
is a trivial cofibration in $\Sigma\mathrm{Sp}_G$. The commutative diagram
\[\xymatrix{
X \ar[r]^-{\theta_{\scriptscriptstyle{G}}}_-{\simeq} \ar[d]^-{\simeq}_-\eta & X_{fG} \ar[d] \\X_f \ar[r] & \ast}\] 
of discrete $G$-spectra yields a weak equivalence 
\[\lambda \: X_f \xrightarrow{\,\simeq\,} X_{fG}\] of discrete $G$-spectra. 

Let $\{U\}$ be a collection of open normal subgroups of $G$ that is 
cofinal in the collection $\{N\}$ of all open normal subgroups of $G$.
Since the $G$-action on $X_f$ is trivial, for each $U$ there is 
the $G/U$-equivariant map 
\[\lambda^U \: X_f = (X_f)^U \to (X_{fG})^U,\] where here, $X_f$ is regarded as a 
$G/U$-spectrum with the trivial action. 

To go further, 
we need to recall several constructions. Following 
\cite[Section 2.4]{joint}, given a finite set $K$ 
and a spectrum $Z$, we let $\mathrm{Map}(K,Z)$ be the spectrum such that for each $m, n \geq 0$, its $m$th pointed simplicial set $\mathrm{Map}(K,Z)_m$ has $n$-simplices equal to $\mathrm{Map}(K,(Z_m)_n)$, 
which is the set of functions from $K$ to $(Z_m)_n$, the set of $n$-simplices 
of $Z_m$. 
It is helpful to note that there is an isomorphism 
\[\mathrm{Map}(K,Z) \cong \prod_K Z.\] 
Also, as in 
\cite[Section 3.2]{joint}, when $K$ is a finite group, 
we let $\mathrm{Map}(K,-)$ be the coaugmented comonad on the category of spectra 
(this comonad is 
written as $\mathrm{Map}^c(K,-)$ in the more general discussion in 
\cite{joint}; we omit the superscript ``$c$" (which is for ``continuous") because here, $K$ is finite (and naturally has the discrete topology)) 
that has the following properties, given a $K$-spectrum $Y$:\begin{enumerate}
\item[$\bullet$]
there is a cosimplicial spectrum 
\[\mathrm{Map}(K^\bullet, Y) \: \Delta \to Sp^\Sigma, \ \ \ 
[n] \mapsto \mathrm{Map}(K^\bullet, Y)([n])\] (denoted by $\Gamma_K^\bullet Y$ in 
\cite[Section 3.2]{joint}); 
\item[$\bullet$] 
for each $n \geq 0$, there are natural isomorphisms of spectra
\[\mathrm{Map}(K^\bullet, Y)([n]) \cong \mathrm{Map}(K^n, Y) \cong \prod_{K^n} Y,\] where $K^n$ is the $n$-fold cartesian product 
of $K$ ($K^0$ is the trivial group $1$), by \cite[Section 4.6]{joint}; 
\item[$\bullet$]
by \cite[Theorem 3.2.1]{joint}, if $Y$ is fibrant as a spectrum, then 
there is a 
natural equivalence
\[Y^{hK} \simeq \holim_\Delta \mathrm{Map}(K^\bullet, Y);\] 
\item[$\bullet$] 
(here, $Y$ need not be fibrant) there is an isomorphism
\[\lim_\Delta \mathrm{Map}(K^\bullet, Y) \cong 
\mathrm{equa}\mspace{-1mu}\bigl[\xymatrix{\mspace{-4.5mu}Y 
\ar@<.9ex>[r]
\ar@<-.5ex>[r]
& \mathrm{Map}(K,Y)}
\mspace{-4.5mu}\bigr] = Y^K,\] where the middle expression 
above is the 
equalizer of the 
``first two" coface maps (which can be denoted as $d^0$ and $d^1$) 
of $\mathrm{Map}(K^\bullet, Y)$ (the above equality 
between the equalizer and $Y^K$ is the homotopical avatar of the fact that for each $t \in \mathbb{Z}$, the kernel of the group homomorphism $\pi_t(d^0)-\pi_t(d^1)$ is $(\pi_t(Y))^{K} = H^0(K,\pi_t(Y))$ (for example, see \cite[Theorem 3.2.1; proof of Proposition 3.5.3]{joint})). \end{enumerate}

Now we put the threads represented by the 
two preceding paragraphs together. For each $U$, the $G/U$-equivariant map $\lambda^U \: X_f \to (X_{fG})^U$ induces a map  
\[\holim_\Delta \mathrm{Map}((G/U)^{\bullet}, X_f) 
\to \holim_\Delta \mathrm{Map}((G/U)^{\bullet}, (X_{fG})^U)\] of spectra, 
which is natural as $U$ varies, and hence, we have the 
following.
\begin{Def}\label{definition_phi}
If $G$ is any profinite group, $X$ a discrete $G$-spectrum with trivial $G$-action, and $\{U\}$ any collection of open normal subgroups of $G$ that is cofinal in $\{N\}$, then there is the map 
\[\Phi \: \colim_U \holim_\Delta \mathrm{Map}((G/U)^{\bullet}, X_f) 
\to \colim_U 
\holim_\Delta \mathrm{Map}((G/U)^{\bullet}, (X_{fG})^U).\] 
The map $\Phi$ is the map $\Phi$ described in Section \ref{intro}.
\end{Def}

\begin{Thm}\label{WhenFinite}
In Definition \ref{definition_phi}, if $G$ is finite, then $\Phi$ is a weak equivalence.
\end{Thm}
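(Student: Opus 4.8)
The plan is to exploit that a finite profinite group $G$ carries the discrete topology, so that the trivial subgroup $1$ is itself an open normal subgroup of $G$. Since $1 \subseteq N$ for every $N$, any collection $\{U\}$ of open normal subgroups that is cofinal in $\{N\}$ must contain $1$; and the canonical maps $\holim_\Delta \mathrm{Map}((G/U)^\bullet, -) \to \holim_\Delta \mathrm{Map}((G/1)^\bullet, -)$ (inflation to the full group) exhibit $U = 1$ as a terminal object of each of the two indexing categories of Definition \ref{definition_phi}. A colimit over a category with a terminal object is computed at that object, so both colimits reduce to their $U = 1$ terms, namely $\holim_\Delta \mathrm{Map}(G^\bullet, X_f)$ and $\holim_\Delta \mathrm{Map}(G^\bullet, X_{fG})$ respectively, and under these identifications $\Phi$ becomes $\holim_\Delta \mathrm{Map}(G^\bullet, \lambda)$, where $\lambda = \lambda^1 \colon X_f = (X_f)^1 \to (X_{fG})^1 = X_{fG}$ is the weak equivalence of discrete $G$-spectra constructed above.

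Next I would record that both $X_f$ and $X_{fG}$ are fibrant as spectra: $X_f$ by construction, and $X_{fG}$ because for finite $G$ the forgetful functor $\Sigma\mathrm{Sp}_G \to Sp^\Sigma$ is right Quillen — its left adjoint $G_+ \wedge (-)$ preserves cofibrations and trivial cofibrations, these being detected on underlying spectra — so it sends the $\Sigma\mathrm{Sp}_G$-fibrant object $X_{fG}$ to a fibrant spectrum (alternatively, invoke \cite{joint}). The third bulleted property of $\mathrm{Map}(K^\bullet, -)$ recalled before Definition \ref{definition_phi} then applies to $Y = X_f$ and to $Y = X_{fG}$, yielding a square that is natural in $\lambda$, whose vertical maps are the equivalences $\holim_\Delta \mathrm{Map}(G^\bullet, X_f) \xrightarrow{\simeq} (X_f)^{hG}$ and $\holim_\Delta \mathrm{Map}(G^\bullet, X_{fG}) \xrightarrow{\simeq} (X_{fG})^{hG}$, whose top edge is $\Phi$ (under the identification of the first paragraph), and whose bottom edge is $\lambda^{hG} \colon (X_f)^{hG} \to (X_{fG})^{hG}$. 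Since $\lambda$ is a weak equivalence in $\Sigma\mathrm{Sp}_G$ and $(-)^{hG}$ is the right derived functor of $(-)^G \colon \Sigma\mathrm{Sp}_G \to Sp^\Sigma$, hence homotopy invariant, the map $\lambda^{hG}$ is a weak equivalence, and two-out-of-three forces $\Phi$ to be one as well.

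If one prefers to avoid the third bullet, the conclusion follows just as directly from homotopy-invariance of $\holim_\Delta$: $\mathrm{Map}(G^\bullet, \lambda)$ is, in each cosimplicial degree $[n]$, the finite product $\prod_{G^n} \lambda \colon \prod_{G^n} X_f \to \prod_{G^n} X_{fG}$ (using $\mathrm{Map}(G^n, -) \cong \prod_{G^n}(-)$), hence an objectwise weak equivalence between objectwise-fibrant cosimplicial spectra — a finite product of fibrant spectra being fibrant — so $\holim_\Delta$ carries it to a weak equivalence by \cite[Theorem 18.5.3]{Hirschhorn}. The one point that requires care is exactly this fibrancy bookkeeping: one must be certain that $X_{fG}$, which is a priori only fibrant in $\Sigma\mathrm{Sp}_G$, is fibrant as a plain spectrum, since both the homotopy-invariance of $\holim_\Delta$ and the identity $Y^{hK} \simeq \holim_\Delta \mathrm{Map}(K^\bullet, Y)$ are available only for objectwise-fibrant cosimplicial spectra; the rest is formal manipulation of colimits.
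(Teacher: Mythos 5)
Your proposal is correct and the second route you offer (the paragraph beginning ``If one prefers to avoid the third bullet\ldots'') is essentially the paper's own argument: reduce to the $U=1$ term using that $1$ is terminal in $\{U\}$, identify $\mathrm{Map}(G^n,-)$ with the finite product $\prod_{G^n}(-)$, observe that $\mathrm{Map}(G^\bullet,\lambda)$ is a levelwise weak equivalence between levelwise-fibrant cosimplicial spectra (citing fibrancy of $X_{fG}$ as a spectrum from \cite{joint}), and apply homotopy-invariance of $\holim_\Delta$. Your first route, which invokes the natural equivalence $Y^{hG}\simeq\holim_\Delta\mathrm{Map}(G^\bullet,Y)$ for fibrant-as-spectrum $Y$ together with homotopy-invariance of the derived functor $(-)^{hG}$, is a mild repackaging of the same ingredients rather than a genuinely different proof; it buys slightly less explicit bookkeeping at the cost of invoking naturality of the equivalence in \cite[Theorem 3.2.1]{joint}. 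You are also right to flag that the only point requiring care is the fibrancy of $X_{fG}$ as a plain spectrum, and your ad hoc right-Quillen argument for the forgetful functor when $G$ is finite is valid, though the paper simply cites \cite[Corollary 5.3.3]{joint}, which holds for all profinite $G$.
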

\begin{proof}
Let $G$ be finite: $G$ has the discrete topology and $1$ is an open normal subgroup of $G$. Since $\{U\}$ is cofinal in $\{N\}$, 
$1 \in \{U\}$, and thus, the indexing category $\{U\}$ has $1$ as the terminal object. It follows that, for example, the source of $\Phi$ 
satisfies the isomorphism 
\[\colim_U \holim_\Delta \mathrm{Map}((G/U)^\bullet, X_f) 
\cong \holim_\Delta \mathrm{Map}((G/1)^\bullet, X_f),\] so that 
$\Phi$ is a weak equivalence if the map 
\[\holim_\Delta \mathrm{Map}(G^\bullet, \lambda) \: \holim_\Delta \mathrm{Map}(G^\bullet, X_f) \to \holim_\Delta 
\mathrm{Map}(G^\bullet, X_{fG})\] is a weak equivalence. Since 
$\lambda$ is a weak equivalence in $\Sigma\mathrm{Sp}_G$, it 
is a weak equivalence in $Sp^\Sigma$. Then for each $n \geq 0$, there is the 
commutative diagram 
\[\xymatrix{
\mathrm{Map}(G^\bullet, X_f)([n]) \ar[r] \ar[d]_-\cong & \mathrm{Map}(G^\bullet, X_{fG})([n]) \ar[d]^-\cong \\
\prod_{G^n} X_f \ar[r]^-\simeq & \prod_{G^n} X_{fG}\,, 
}\] in which the bottom horizontal map is a weak equivalence (in 
$Sp^\Sigma$, as, for example, in \cite[proof of Lemma 2.4.1]{joint}). Additionally, 
the source and target of the bottom horizontal map are 
fibrant spectra ($X_{fG}$ is fibrant as a spectrum, by 
\cite[Corollary 5.3.3]{joint}), so that the top horizontal map is a 
weak equivalence between fibrant spectra, 
and hence, the map $\holim_\Delta \mathrm{Map}(G^\bullet, \lambda)$ 
is a weak equivalence, by \cite[Theorem 18.5.3]{Hirschhorn}. 
\end{proof}

Now we explain the identifications (alluded to in Section \ref{intro}) 
that allow us to regard the map $\Phi$ in Definition \ref{definition_phi} 
as a map \[\colim_N X^{hG/N} \to \colim_N (X^{hN})^{hG/N}.\]

Since the collection $\{U\}$ is cofinal in $\{N\}$, there is an isomorphism 
\[\colim_U \holim_\Delta \mathrm{Map}((G/U)^{\bullet}, X_f) \cong \colim_N \holim_\Delta \mathrm{Map}((G/N)^{\bullet}, X_f).\]
For each $N$, $X = X^N$ and $X_f = (X_f)^N$ are $G/N$-spectra and 
the weak equivalence $\eta \: X \xrightarrow{\,\simeq\,} X_f$ is $G/N$-equivariant, and hence, there is a weak equivalence
\[X^{hG/N} \xrightarrow{\,\simeq\,} (X_f)^{hG/N}.\] The above 
isomorphism and this last weak equivalence, 
together with the equivalence
\[\holim_\Delta \mathrm{Map}((G/N)^{\bullet}, X_f) \simeq (X_f)^{hG/N},\] imply that the source of $\Phi$ can be identified with 
$\colim_N X^{hG/N}$, as desired. 

For each open normal subgroup $N$ of $G$, $(X_{fG})^N$ is a 
fibrant spectrum (see \cite[proof of Corollary 5.3.3]{joint}), so that 
the isomorphism 
\[\colim_U \holim_\Delta \mathrm{Map}((G/U)^{\bullet}, (X_{fG})^U) \cong \colim_N \holim_\Delta \mathrm{Map}((G/N)^{\bullet}, (X_{fG})^N)\] implies that the target of $\Phi$ can be 
regarded as $\colim_N ((X_{fG})^N)^{hG/N}$. Now let $N$ be fixed. Since the 
fibrant replacement $\theta_{\scriptscriptstyle{G}} \: X \xrightarrow{\,\simeq\,} X_{fG}$ is a trivial cofibration of discrete $N$-spectra, 
it follows 
from the commutative diagram
\[\xymatrix{
X \ar[r]^-{\theta_{\scriptscriptstyle{N}}}_-{\simeq} 
\ar[d]^-{\simeq}_-{\theta_{\scriptscriptstyle{G}}} & X_{fN} \ar[d] \\X_{fG} \ar[r] & \ast}\] 
of discrete $N$-spectra that there is a weak equivalence 
\[\lambda_{\scriptscriptstyle{N}} \: X_{fG} \xrightarrow{\,\simeq\,} X_{fN}\] of discrete $N$-spectra. 
Furthermore, since $X_{fG}$ is fibrant as a discrete $N$-spectrum (by \cite[Proposition 3.3.1, (2)]{joint}) and 
\[(-)^N \: \Sigma{\mathrm{Sp}}_N \to Sp^\Sigma\] is a 
right Quillen functor \cite[Lemma 3.1.1]{joint}, the induced map
\[
(\lambda_{\scriptscriptstyle{N}})^N \: (X_{fG})^N \xrightarrow{\,\simeq\,} (X_{fN})^N = X^{hN}
\] is a 
weak equivalence (this conclusion is not original: for example, see \cite[proof of Proposition 3.3.1, (3)]{joint}). It follows that 
$(X_{fG})^N$ can be identified with $X^{hN}$, and thus, the target of 
$\Phi$ can be identified with 
$\colim_N (X^{hN})^{hG/N}$. This completes the construction of the map $\Phi$, as described in Section \ref{intro}. 

\begin{Rk}\label{remark_collection}
The definition of $\Phi$ depends on the cofinal collection 
$\{U\}$, and so, when necessary, we can refer to $\Phi$ as 
$\Phi_{\{U\}}$. If $\{V\}$ is any other collection of open normal subgroups of $G$ that is cofinal in $\{N\}$, then there is the commutative 
diagram 
\[\xymatrix{
\colim_U \holim_\Delta \mathrm{Map}((G/U)^{\bullet}, X_f) 
\ar[r]^-{\Phi_{\{U\}}} \ar[d]_-\cong & \colim_U 
\holim_\Delta \mathrm{Map}((G/U)^{\bullet}, (X_{fG})^U) \ar[d]^-\cong\\
\colim_V \holim_\Delta \mathrm{Map}((G/V)^{\bullet}, X_f) 
\ar[r]^-{\Phi_{\{V\}}} & \colim_V 
\holim_\Delta \mathrm{Map}((G/V)^{\bullet}, (X_{fG})^V),}\]
in which the sources of $\Phi_{\{U\}}$ and $\Phi_{\{V\}}$ 
are isomorphic, and similarly their targets, so that in the sense made 
precise by this diagram, $\Phi_{\{U\}}$ is ``unique up to isomorphism." 
For this reason, we do not refrain (in Section \ref{intro_proper}, this section, and elsewhere) from using language that might make it seem like $\Phi$ is 
unique or independent of the choice of $\{U\}$. Of course, the 
above diagram shows that $\Phi_{\{U\}}$ is a weak equivalence if and only 
if $\Phi_{\{V\}}$ is a weak equivalence.
\end{Rk}

Now we construct the weak equivalence
\[\Psi \: X^{hG} \xrightarrow{\,\simeq\,} \colim_N (X^{hN})^{hG/N}.\] 
For each $U$, there is the canonical map 
\[\iota_{\scriptscriptstyle{U}} \: \lim_\Delta \mathrm{Map}((G/U)^{\bullet}, (X_{fG})^U) \to 
\holim_\Delta \mathrm{Map}((G/U)^{\bullet}, (X_{fG})^U)\] 
between the limit and homotopy limit, and the source of this map 
satisfies the isomorphism 
\[\lim_\Delta \mathrm{Map}((G/U)^{\bullet}, (X_{fG})^U) 
\cong ((X_{fG})^U)^{G/U} = (X_{fG})^G = X^{hG},\] which yields 
the map
\[\widehat{\iota}_{\scriptscriptstyle{U}} \: X^{hG} \xrightarrow{\,\cong\,} 
\lim_\Delta \mathrm{Map}((G/U)^{\bullet}, (X_{fG})^U) 
\xrightarrow{\, {\textstyle{\iota}}_{\scriptscriptstyle{U}}\, }
\holim_\Delta \mathrm{Map}((G/U)^{\bullet}, (X_{fG})^U).\] Then we define 
\[\Psi := \colim_U \widehat{\iota}_{\scriptscriptstyle{U}} \: X^{hG} = 
\colim_U X^{hG} \longrightarrow 
\colim_U \holim_\Delta \mathrm{Map}((G/U)^{\bullet}, (X_{fG})^U),\] 
where, before any identifications are made, the target of $\Psi$ is 
exactly equal to the target of $\Phi$ (that is, $\Phi_{\{U\}}$), so that as in the above discussion about the target of 
$\Phi$, the target of $\Psi$ can be written as $\colim_N (X^{hN})^{hG/N}$. 

\begin{Rk}\label{remark_psi}
All the comments in Remark \ref{remark_collection} go through for $\Psi$, mutatis mutandis. Also, in the context of $\Psi$, the last 
sentence of Remark \ref{remark_collection} can be strengthened to say that every morphism $\Psi_{\{U\}}$ is a weak equivalence (as we show below).  
\end{Rk}

To show that $\Psi$, a filtered colimit, is a weak equivalence, it 
suffices to show that each $\widehat{\iota}_{\scriptscriptstyle{U}}$ is a weak equivalence between fibrant spectra. Since $(-)^G \: \Sigma\mathrm{Sp}_G \to Sp^\Sigma$ is a right Quillen functor, $X^{hG}$ is a fibrant spectrum. Also, for each $n \geq 0$, the isomorphism
\[\mathrm{Map}((G/U)^{n}, (X_{fG})^U) \cong \prod_{(G/U)^{n}} 
(X_{fG})^U\] implies that the spectrum of $n$-cosimplices of 
$\mathrm{Map}((G/U)^{\bullet}, (X_{fG})^U)$ is fibrant (since $(X_{fG})^U$ is fibrant), and 
hence, $\holim_\Delta \mathrm{Map}((G/U)^{\bullet}, (X_{fG})^U)$ is 
fibrant, by \cite[Theorem 18.5.2]{Hirschhorn}. Now we only need 
to show that 
each ${\iota}_{\scriptscriptstyle{U}}$ is a weak equivalence. 
 
For each $U$, there is the equivalence
\[\holim_\Delta \mathrm{Map}((G/U)^{\bullet}, (X_{fG})^U) 
\simeq ((X_{fG})^U)^{hG/U},\] and since $(X_{fG})^U$ is a fibrant discrete $G/U$-spectrum \cite[Proposition 3.3.1, (1)]{joint}, the functorial fibrant replacement $\theta_{\scriptscriptstyle{G/U}} \: (X_{fG})^U \xrightarrow{\,\simeq\,} ((X_{fG})^U)_{fG/U}$ in $\Sigma\mathrm{Sp}_{G/U}$ induces the weak equivalence 
\[
(\theta_{\scriptscriptstyle{G/U}})^{G/U} \: 
X^{hG} = ((X_{fG})^U)^{G/U} \xrightarrow{\,\simeq\,} (((X_{fG})^U)_{fG/U})^{G/U} = ((X_{fG})^U)^{hG/U}
\]
(this argument is a special case of \cite[Proposition 3.5.1]{joint}). These observations yield the equivalence 
\[\holim_\Delta \mathrm{Map}((G/U)^{\bullet}, (X_{fG})^U) 
\simeq X^{hG},\] which naturally fits into the ``bigger 
picture"
\[X^{hG} \xrightarrow{\,\cong\,} \lim_\Delta \mathrm{Map}((G/U)^{\bullet}, (X_{fG})^U) \xrightarrow{\,\textstyle{\iota}_{\scriptscriptstyle{U}}\,} \holim_\Delta \mathrm{Map}((G/U)^{\bullet}, (X_{fG})^U) \simeq X^{hG}.\] It follows from this picture that 
the canonical map $\iota_{\scriptscriptstyle{U}}$ is a weak equivalence, as desired. 
This 
completes our proof that $\Psi$ is a weak equivalence. 

\section{The proof of Theorem \ref{vanishing}}\label{section_vanishing}

As before, $G$ is a profinite group and $X$ is a discrete $G$-spectrum with trivial $G$-action. Additionally, we suppose that $\{U\}$ is a cofinal subcollection of $\{N\}$, such that for each $U$ and every integer $t$, 
$H^s_c(U, \pi_t(X)) = 0$, whenever $s>0$. To prove part (a) of 
Theorem 
\ref{vanishing}, we want to obtain that 
$\Phi = \Phi_{\{V\}}$ is a weak equivalence, where $\{V\}$ is any 
cofinal subcollection of $\{N\}$. Thus, to prove Theorem \ref{vanishing}, 
we only need to prove that (i) $\Phi_{\{U\}}$ is a weak equivalence (see Remark \ref{remark_collection}), and, for each $U$, there is (ii) 
an equivalence
$X^{hG/U} \simeq X^{hG}$ and (iii) a weak equivalence $X 
\xrightarrow{\,\simeq\,} X^{hU}$. To this end, recall from Definition 
\ref{definition_phi} that 
\[\Phi_{\{U\}} = \colim_U 
\holim_\Delta \mathrm{Map}((G/U)^\bullet, \lambda^U),\] where 
each map \[h_{\scriptscriptstyle{U}} := 
\holim_\Delta \mathrm{Map}((G/U)^\bullet, \lambda^U)\] is the 
``lengthy centerpiece" in the ``picture"
\[X^{hG/U} \simeq 
\holim_\Delta \mathrm{Map}((G/{U})^{\bullet}, X_f) 
\xrightarrow{\,\scriptstyle{h}_{\scriptscriptstyle{U}}\,} \holim_\Delta \mathrm{Map}((G/{U})^{\bullet}, (X_{fG})^U) 
\simeq X^{hG}\] (the two equivalences on the ends are explained in Section \ref{map_phi}). Also, in Section \ref{map_phi} we saw that the target of $h_{\scriptscriptstyle{U}}$ is a fibrant spectrum, and similarly, so is the source of this map. Therefore, to show (i) and (ii), we only need to verify 
that for each $U$, the map $h_{\scriptscriptstyle{U}}$ 
is a weak equivalence. The proof of (iii) will be a byproduct of our argument for (i) and (ii). 

It will be helpful to recall that if $K$ is a finite group and $Y$ is a $K$-spectrum that is also fibrant as a spectrum, then associated to the equivalence $Y^{hK} \simeq \holim_\Delta \mathrm{Map}(K^\bullet, Y)$ is the conditionally convergent 
homotopy spectral sequence 
\[E_2^{s,t} \Longrightarrow \pi_{t-s}(\holim_\Delta \mathrm{Map}(K^\bullet, Y)) \cong \pi_{t-s}(Y^{hK}),\] where
\[E_2^{s,t} = H^s\bigl[\pi_t(\mathrm{Map}(K^\ast, Y))\bigr] 
\cong H^s(K, \pi_t(Y)),\] 
with $\pi_t(\mathrm{Map}(K^\ast, Y))$ denoting the usual cochain complex associated to the cosimplicial abelian group 
$\pi_t(\mathrm{Map}(K^\bullet, Y))$ (by, for example, 
\cite[proof of Proposition 3.5.3]{joint}; this homotopy 
spectral sequence gives one way of obtaining 
the homotopy fixed point spectral sequence for $Y^{hK}$).  

Now fix any $U' \in \{U\}$. The map $\lambda^{U'} \: X_f \to (X_{fG})^{U'}$ induces 
the morphism 
\[\mathrm{Map}((G/{U'})^{\bullet}, X_f) 
\to \mathrm{Map}((G/{U'})^{\bullet}, (X_{fG})^{U'})\] of cosimplicial 
spectra, which gives 
the morphism 
\[\xymatrix{
H^s(G/{U'}, \pi_t(X)) \ar[d] \ar@{=>}[r] & \pi_{t-s}(\holim_\Delta \mathrm{Map}((G/{U'})^{\bullet}, X_f)) \ar[d]^-{\pi_{t-s}(h_{\scriptscriptstyle{U'}})}\\
H^s(G/{U'}, \pi_t((X_{fG})^{{U'}})) \ar@{=>}[r] & \pi_{t-s}(\holim_\Delta \mathrm{Map}((G/{U'})^{\bullet}, (X_{fG})^{U'}))}\] of 
conditionally convergent homotopy spectral sequences. To show 
that $h_{\scriptscriptstyle{U'}}$ is a weak equivalence, it suffices 
to show that for each $t \in \mathbb{Z}$, the $G/{U'}$-equivariant 
composition 
\[\pi_t(X) \xrightarrow{\,\cong\,} \pi_t(X_f) \xrightarrow{\,\pi_t(\lambda^{U'})\,} \pi_t((X_{fG})^{U'})\] is an 
isomorphism, for this implies that the map 
between the 
$E_2$-terms of the above two spectral sequences is an isomorphism, which then gives that the map between the two abutments is an 
isomorphism, yielding the desired conclusion.

For every $U$, $U \cap U'$ is an open normal subgroup of $G$, so 
that by the cofinality of $\{U\}$ in $\{N\}$, we can choose one 
$U'' \in \{U\}$ such that $U \cap U'$ contains $U''$. Then the 
collection $\{U\}$ induces a collection $\{U''\}$, which is cofinal in the collection of all open normal subgroups of $U'$ (each $U'' = U'' \cap U'$ is 
open in $U'$; since $\{U\}$ is a directed poset and $\{U''\}$ is a subset of $\{U\}$, it follows easily that 
$\{U''\}$ is a directed poset; if $V$ is an open normal subgroup of 
$U'$, then $V$ is an open subgroup of $G$, and hence, $V$ contains a subgroup $V'$ that is open and normal in $G$, so that by the cofinality of $\{U\}$ in $\{N\}$, $V'$ contains some $W \in \{U\}$: by definition, there is 
some $W'' \in \{U''\}$ such that $W \cap U'$ contains $W''$, so that  
$W'' \subset V' \cap U' \subset V \cap U' = V$, giving $W'' \subset V$, as 
desired). Since $\{U''\}$ is a subset of $\{U\}$, 
$H^s_c(U'', \pi_t(X)) = 0$, for 
all $s > 0$, every $U''$, and each integer $t$, and therefore, the conditionally convergent 
homotopy fixed point spectral sequence 
\begin{equation}\label{hfps}\zig
H^s_c(U', \pi_t(X)) \Longrightarrow \pi_{t-s}(X^{hU'})\end{equation} exists (see 
\cite[page 911: condition (ii), (1.2)]{nyjm2nd}, \cite[proof of Theorem 3.2.1]{joint}, and \cite[proof of Theorem 7.4]{cts}). 
By hypothesis, this spectral sequence collapses at the $E_2$-page, so that for every 
integer $t$,
\begin{equation}\label{isomorphisms}\zig\pi_t(X^{hU'}) \cong H^0_c(U', \pi_t(X)) \cong (\pi_t(X))^{U'} = \pi_t(X).\end{equation}  

To complete the proof of Theorem \ref{vanishing}, we need a recollection from \cite{joint} so that we can partly unpack spectral sequence (\ref{hfps}). 
Let $H$ be any profinite group. As in \cite[Section 2.4]{joint}, given 
a spectrum $Z$, the spectrum $\mathrm{Map}^c(H,Z)$ of 
continuous maps is defined by
\[\mathrm{Map}^c(H,Z) = \colim_{N_H} \mathrm{Map}(H/{N_H}, Z),\] 
where the colimit is over all the open normal subgroups $N_H$ of $H$. 
Also, by \cite[Section 3.2]{joint}, 
%
if $Y$ is a discrete 
$H$-spectrum, then there is a 
cosimplicial spectrum $\mathrm{Map}^c(H^\bullet, Y)$ -- written as 
$\Gamma^\bullet_H Y$ in \cite{joint} -- with the property that for each $[n] \in \Delta$, there is a natural isomorphism 
\[\mathrm{Map}^c(H^\bullet, Y)([n]) \cong \mathrm{Map}^c(H^n, Y),\] 
by \cite[Section 4.6]{joint}, and as in Section \ref{map_phi},
\[
\lim_\Delta \mathrm{Map}^c(H^{\bullet}, Y) 
\cong \mathrm{equa}\mspace{-1mu}\bigl[\mspace{-4.5mu}\xymatrix{Y 
\ar@<.9ex>[r]
\ar@<-.5ex>[r]
& \mathrm{Map}^c(H, Y)}\mspace{-4.5mu}\bigr] 
= Y^H.\] 
Then spectral sequence (\ref{hfps}) is just the homotopy spectral 
sequence associated to the left-hand side in the equivalence
\[\holim_\Delta \mathrm{Map}^c((U')^\bullet, X_{fG}) \simeq X^{hU'},\] 
which follows from the properties of the collection 
$\{U''\}$ (see the citations that were given above for this), and the 
isomorphism $\pi_t(X^{hU'}) \cong \pi_t(X)$ of (\ref{isomorphisms}) 
comes from the above equivalence and an isomorphism 
\begin{equation}\label{last_one}\zig
\pi_t(\holim_\Delta \mathrm{Map}^c((U')^\bullet, X_{fG})) 
\xleftarrow{\,\cong\,} \pi_t(X),\end{equation} valid for all integers $t$ 
(given by the collapsing of spectral sequence (\ref{hfps}), as before), that is induced by the 
composition
\[X \xrightarrow[\simeq]{\eta} X_f \xrightarrow{\lambda^{U'}} 
(X_{fG})^{U'} \xrightarrow{\cong} \lim_\Delta \mathrm{Map}^c((U')^\bullet, X_{fG}) \xrightarrow{\iota(U')} \holim_\Delta 
\mathrm{Map}^c((U')^\bullet, X_{fG}),\] where $\iota(U')$ is the 
canonical map between the limit and homotopy limit. The isomorphism in (\ref{last_one}) implies that this composition is a weak equivalence, 
and since there are equivalences
\[(X_{fG})^{U'} \xrightarrow[\,\simeq\,]{(\lambda_{\scriptscriptstyle{U'}})^{U'}} X^{hU'} \simeq \holim_\Delta \mathrm{Map}^c((U')^\bullet, X_{fG})\] (see Section \ref{map_phi} for 
weak equivalence $(\lambda_{\scriptscriptstyle{U'}})^{U'}$), it follows that $\iota(U')$ is a weak equivalence, 
and hence, $\lambda^{U'}$ is a weak equivalence. Therefore, 
the map $\pi_t(\lambda^{U'})$ is an isomorphism for every $t \in 
\mathbb{Z}$, completing the proof that $h_{\scriptscriptstyle{U'}}$ is a 
weak equivalence. 

Finally, since $\lambda^{U'}$ is a weak equivalence, the 
composition
\[X \xrightarrow[\,\simeq\,]{\eta} X_f \xrightarrow[\,\simeq\,]{\lambda^{U'}} (X_{fG})^{U'} \xrightarrow[\,\simeq\,]{(\lambda_{\scriptscriptstyle{U'}})^{U'}} X^{hU'}\] defines the 
desired weak equivalence $X \xrightarrow{\,\simeq\,} X^{hU'},$ 
which completes the proof of Theorem \ref{vanishing}. 

\section{Discrete $G$-spectra with trivial action that are bounded above}\label{section_4}

In this section, we use the notation of Section \ref{map_phi}. Thus, $G$ 
is an arbitrary profinite group, 
$\{U\}$ is any fixed cofinal subcollection of $\{N\}$, and, as usual,  
$\Phi$ denotes $\Phi_{\{U\}}$. After the preliminary result below, 
which does not require $X$ to be bounded above, 
we give a proof of Theorem \ref{coconn}. 

In the following result, 
the construction $\mathrm{Map}^c(G^\bullet, Y)$ for a discrete 
$G$-spectrum $Y$ -- recalled in Section \ref{section_vanishing} -- 
is from \cite[Section 3.2]{joint}.

\begin{Thm}\label{diagram}
If $G$ is any profinite group, $X$ is a discrete $G$-spectrum whose $G$-action is trivial, and $\{U\}$ is a collection of open normal subgroups that is cofinal in the collection $\{N\}$ of all open normal subgroups of $G$, then there is the commutative diagram 
\[\xymatrix{
\colim_U \holim_\Delta \mathrm{Map}((G/U)^{\bullet}, X_f) 
\ar[r]^-\Phi \ar[d]_-{s_1} & \colim_U 
\holim_\Delta \mathrm{Map}((G/U)^{\bullet}, (X_{fG})^U) \ar[d]^-{s_2}\\
\holim_\Delta \colim_U \mathrm{Map}((G/U)^{\bullet}, X_f) 
\ar[r]^-\simeq \ar[d]_-\cong & \holim_\Delta 
\colim_U \mathrm{Map}((G/U)^{\bullet}, (X_{fG})^U) \ar[d]^-\cong\\ 
\holim_\Delta \mathrm{Map}^c(G^{\bullet}, X_f) 
\ar[r]^-\simeq & \holim_\Delta \mathrm{Map}^c(G^{\bullet}, X_{fG}),}
\] where $s_1$ and $s_2$ are the canonical colim/holim interchanges, 
the lower two vertical maps are isomorphisms induced by the colimits, and the middle and bottom horizontal maps are weak equivalences induced by the collection $\{\lambda^U \: X_f \to (X_{fG})^U\}_{\{U\}}$ and $\lambda \: X_f \xrightarrow{\,\simeq\,} X_{fG}$, respectively.
\end{Thm}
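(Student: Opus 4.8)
The plan is to construct the diagram by assembling it from three horizontal rows and two columns of vertical maps, verifying commutativity of the two squares separately and then checking that the claimed maps have the stated properties.

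\smallskip

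\textbf{Building the vertical maps.} First I would recall that for a filtered diagram of spectra and a cosimplicial object, there is always a canonical comparison map $\colim\,\holim \to \holim\,\colim$; applying this levelwise in $\Delta$ to the cosimplicial spectra $\mathrm{Map}((G/U)^\bullet, X_f)$ and $\mathrm{Map}((G/U)^\bullet, (X_{fG})^U)$ yields $s_1$ and $s_2$. For the lower vertical isomorphisms, the key point is the identification already recorded in Section \ref{section_vanishing}: since $\{U\}$ is cofinal in $\{N\}$ and $\mathrm{Map}^c(G,-) = \colim_{N} \mathrm{Map}(G/N, -)$ by \cite[Section 2.4]{joint}, one gets, for each $n$, a natural isomorphism $\colim_U \mathrm{Map}((G/U)^n, X_f) \cong \mathrm{Map}^c(G^n, X_f)$ (using that $X_f$ has trivial action so the transition maps are the evident diagonal inclusions), and similarly with $(X_{fG})^U$ in place of $X_f$, where one must use that $\colim_U (X_{fG})^U = X_{fG}$ (the discrete $G$-spectrum $X_{fG}$ is the union of its $U$-fixed points, by definition of a discrete $G$-spectrum). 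These isomorphisms are compatible with the cosimplicial structure, hence induce isomorphisms on $\holim_\Delta$; this gives the two lower vertical maps and, by functoriality in the coefficient spectrum, identifies the induced map between the middle row with the map between the bottom row induced by $\lambda$.

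\smallskip

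\textbf{The horizontal weak equivalences.} The bottom horizontal map is $\holim_\Delta \mathrm{Map}^c(G^\bullet, \lambda)$. Since $\lambda \colon X_f \xrightarrow{\simeq} X_{fG}$ is a weak equivalence of spectra and, for each $n$, $\mathrm{Map}^c(G^n, -) \cong \colim_{N} \prod_{(G/N)^n}(-)$ is a filtered colimit of finite products, the map $\mathrm{Map}^c(G^n, \lambda)$ is a weak equivalence of spectra for every $n$; moreover both source and target are fibrant — $\prod_{(G/N)^n} X_{fG}$ and $\prod_{(G/N)^n} X_f$ are fibrant (as $X_{fG}$ is fibrant as a spectrum by \cite[Corollary 5.3.3]{joint} and $X_f$ is fibrant by construction) and a filtered colimit of fibrant spectra over a directed poset is fibrant. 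By \cite[Theorem 18.5.3]{Hirschhorn} the induced map on $\holim_\Delta$ is a weak equivalence. The middle horizontal map is isomorphic to the bottom one via the lower vertical isomorphisms (by the compatibility noted above), hence is also a weak equivalence; alternatively one argues directly, exactly as above, that $\colim_U \mathrm{Map}((G/U)^n, \lambda^U)$ is a weak equivalence between fibrant spectra for each $n$.

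\smallskip

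\textbf{Commutativity.} The bottom square commutes because the two lower vertical isomorphisms are natural in the coefficient spectrum and $\lambda^U$ is, after passing to the colimit, identified with $\lambda$; this is a diagram chase using only naturality. The top square is the assertion that the colim/holim interchange maps $s_1, s_2$ are natural with respect to the map of cosimplicial diagrams induced by $\{\lambda^U\}$ — that is, $s_2 \circ \Phi = (\holim_\Delta \colim_U \mathrm{Map}((G/U)^\bullet, \lambda^U)) \circ s_1$ — which holds because $\Phi = \colim_U \holim_\Delta \mathrm{Map}((G/U)^\bullet, \lambda^U)$ by Definition \ref{definition_phi} and the interchange map is a natural transformation of functors. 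I expect the main (though still modest) obstacle to be bookkeeping: namely pinning down precisely that the colimit $\colim_U (X_{fG})^U$ recovers $X_{fG}$ and that the resulting identifications are compatible both with the cosimplicial coface/codegeneracy maps and with the map $\lambda$, so that the two isolated squares glue into the single commutative diagram as stated; everything else is an application of \cite[Theorem 18.5.3]{Hirschhorn} together with the elementary behavior of filtered colimits with respect to finite products, fibrancy, and weak equivalences.
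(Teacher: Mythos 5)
Your proposal follows essentially the same route as the paper's proof: use cofinality to identify the lower two rows with $\holim_\Delta \mathrm{Map}^c(G^\bullet,-)$ (including, for the right column, the fact that $\colim_N(X_{fG})^N \cong X_{fG}$ since $X_{fG}$ is a discrete $G$-spectrum), observe that $\lambda$ induces levelwise weak equivalences between fibrant spectra, and then apply \cite[Theorem 18.5.3]{Hirschhorn} to conclude the bottom (hence middle) horizontal map is a weak equivalence; commutativity of the two squares is, as you say, just naturality of the interchange and of the identifications. The only place you are a bit quicker than you should be is the fibrancy of $\mathrm{Map}^c(G^n,X_f)$ and $\mathrm{Map}^c(G^n,X_{fG})$: you invoke a blanket principle that filtered colimits over directed posets preserve fibrancy in $Sp^\Sigma$, but in the stable model structure of \cite{HSS} the fibrant objects are the \emph{injective} $\Omega$-spectra, and injectivity is not in general preserved by filtered colimits. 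The paper does not rely on such a general principle — it records the isomorphism $\mathrm{Map}^c(G^n,Z) \cong \colim_V \prod_{G^n/V}Z$ and leans on \cite[Lemma 2.4.1]{joint}, which is the Behrens--Davis input that gives both the fibrancy and the preservation of weak equivalences for $\mathrm{Map}^c$ applied to fibrant spectra. So you should replace the ``filtered colimits of fibrants are fibrant'' step with a citation of \cite[Lemma 2.4.1]{joint}; with that change, the argument matches the paper's.
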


\begin{proof} 
To save space, we sometimes write $\mathcal{M}$ 
for ``$\mathrm{Map}$." 
Since the collection $\{U\}$ is cofinal in $\{N\}$, for each $[n] \in \Delta$, there are isomorphisms
\[\colim_U (\mathrm{Map}((G/U)^\bullet, X_f)([n])) \xrightarrow{\cong} \colim_U \mathcal{M}((G/U)^{n}, X_f) \xrightarrow{\cong} \colim_N \mathcal{M}((G/N)^{n}, X_f)\] and 
\[\colim_N \mathcal{M}((G/N)^{n}, X_f) \xrightarrow{\,\cong\,} \mathrm{Map}^c(G^n, X_f) \xrightarrow{\,\cong\,} 
\mathrm{Map}^c(G^\bullet, X_f)([n])
\] 
(for the next-to-last isomorphism, see, for example, \cite[proof of Lemma 6.5.4, (b)]{Ribes}),
whose composition is the isomorphism 
\[\colim_U (\mathrm{Map}((G/U)^\bullet, X_f)([n])) 
\xrightarrow{\,\cong\,} 
\mathrm{Map}^c(G^\bullet, X_f)([n]),
\]
and thus, the 
lower left vertical map is an isomorphism (as in \cite[proof of Theorem 8.2.5: page 5038]{joint}). Similarly, for each $[n]$, there are isomorphisms
\[\colim_U \mathrm{Map}((G/U)^{n}, (X_{fG})^U) \xrightarrow{\,\cong\,} 
\mathrm{Map}^c(G^{n}, \colim_N (X_{fG})^N) \xrightarrow{\,\cong\,} 
\mathrm{Map}^c(G^{n}, X_{fG}),\] where the second one follows 
from the isomorphism 
$\colim_N (X_{fG})^N \xrightarrow{\,\cong\,} X_{fG}$ in $\Sigma\mathrm{Sp}_G$, which exists 
because $X_{fG}$ is a discrete $G$-spectrum, so that 
the lower right vertical map is also an isomorphism. 

It will be helpful to note that if $Z$ is a fibrant spectrum 
and $n \geq 1$, then the 
isomorphism 
\[\mathrm{Map}^c(G^n, Z) \cong \colim_V \prod_{G^n/V} Z,\] where 
the colimit is over all the open normal subgroups $V$ of $G^n$, implies 
that $\mathrm{Map}^c(G^n, Z)$ is a fibrant spectrum. Now the 
argument follows, with a few changes, the proof of Theorem 
\ref{WhenFinite}. By \cite[Lemma 2.4.1]{joint}, the maps 
\[\mathrm{Map}^c(G^{n}, X_f) \to \mathrm{Map}^c(G^{n}, X_{fG}), \ \ \ n \geq 0,\] are weak equivalences between fibrant spectra: this fact, 
coupled with the commutative diagrams
\[\xymatrix{
\mathrm{Map}^c(G^\bullet, X_f)([n]) \ar_-\cong[d] \ar[r] & \mathrm{Map}^c(G^\bullet, X_{fG})([n]) \ar^-\cong[d] \\
\mathrm{Map}^c(G^n, X_f) \ar^-\simeq[r] & \mathrm{Map}^c(G^n, X_{fG})}
\] for each $n \geq 0$, implies that the upper horizontal 
map in each of these diagrams is a weak equivalence between 
fibrant spectra, and hence, the map $\holim_\Delta \mathrm{Map}^c(G^\bullet, \lambda)$ (the bottom horizontal map in the statement of Theorem \ref{diagram}) is a weak equivalence. Therefore, 
the map 
$\holim_\Delta \colim_U \mathrm{Map}((G/U)^\bullet, \lambda^U)$ 
(the middle horizontal map in the statement of Theorem \ref{diagram}) 
is a weak equivalence. 
\end{proof}


Now we give the proof of Theorem 
\ref{coconn}: as above, we let 
$X$ be a discrete $G$-spectrum with trivial $G$-action; 
additionally, we assume that $X$ is bounded above, so that 
there exists some $t_0 \in \mathbb{Z}$ such that $\pi_t(X) = 0$, 
whenever $t > t_0$; and we note that we only need 
to prove that $\Phi$ is a weak equivalence. 
The argument begins by considering the commutative diagram in Theorem \ref{diagram} further. To show that $\Phi$ is a 
weak equivalence, it suffices to show that $s_1$ and $s_2$ are 
weak equivalences. We first show that 
\[s_1 \: \colim_U \holim_\Delta \mathrm{Map}((G/U)^\bullet, X_f) \to 
\holim_\Delta \colim_U \mathrm{Map}((G/U)^\bullet, X_f)\]
is a weak equivalence. 
For every $U$, all $n \geq 0$, and each integer $t > t_0$, there are isomorphisms
\[
\pi_t(\mathrm{Map}((G/U)^\bullet, X_f)([n])) \cong \pi_t(\mathrm{Map}((G/U)^n, X_f)) \cong \prod_{\scriptscriptstyle{(G/U)^n}} \pi_t(X) = 0,\] 
and thus, \cite[Proposition 3.4]{Mitchell} implies that 
$s_1$ is a weak equivalence.  

Now we consider $s_2$, by placing it within the 
commutative diagram
\[\xymatrix{
\displaystyle{\colim_U \lim_\Delta} 
\, \mathrm{Map}((G/U)^{\bullet}, (X_{fG})^U) \ar[d]_-{s_4} 
\ar[r]^-{s_3}  
& \displaystyle{\colim_U \holim_\Delta} 
\, \mathrm{Map}((G/U)^{\bullet}, (X_{fG})^U) \ar[d]^-{s_2}
\\
\displaystyle{\lim_\Delta \colim_U}\, 
\mathrm{Map}((G/U)^{\bullet}, (X_{fG})^U) \ar[r]^-{s_5} 
& \displaystyle{\holim_\Delta \colim_U}\, 
\mathrm{Map}((G/U)^{\bullet}, (X_{fG})^U),}
\] 
where $s_3 := \colim_U 
\iota_{\scriptscriptstyle{U}}$ is immediately seen 
to be a weak equivalence (see the definition of weak equivalence 
$\Psi$ in Section 
\ref{map_phi}), $s_4$ is the canonical map that interchanges the colimit and limit, and $s_5$ is the map from a limit to a homotopy limit. Since $s_4$ can be viewed as interchanging a filtered colimit and an equalizer, which is a finite limit, $s_4$ is an isomorphism. 

To complete the proof that $s_2$ is a weak equivalence, we only 
need to show that $s_5$ is a weak equivalence. 
The map $s_5$ and the lower right vertical isomorphism 
in the diagram in Theorem \ref{diagram} fit into the commutative 
diagram
\[\xymatrix{
\displaystyle{\lim_\Delta \colim_U}\, 
\mathrm{Map}((G/U)^{\bullet}, (X_{fG})^U) \ar[d]_-\cong \ar[r]^-{s_5} 
& \displaystyle{\holim_\Delta \colim_U}\, 
\mathrm{Map}((G/U)^{\bullet}, (X_{fG})^U) \ar[d]^-\cong\\
\displaystyle{\lim_\Delta}\, 
\mathrm{Map}^c(G^{\bullet}, X_{fG}) \ar[r]^-{s_6} & 
\displaystyle{\holim_\Delta}\, 
\mathrm{Map}^c(G^{\bullet}, X_{fG}),}\] in which the left 
vertical map is an isomorphism for the same reason that the right 
vertical map is one, and thus, we only 
need to show that the bottom horizontal map $s_6$ is a weak 
equivalence, which is a consequence of the following two observations:
since $X$ is bounded above, there is an equivalence
\[
X^{hG} \simeq \holim_\Delta \mathrm{Map}^c(G^{\bullet}, X_{fG}),
\] by \cite[page 911, especially condition (iii)]{nyjm2nd} and 
\cite[proof of Theorem 3.2.1]{joint}; and
as in Sections \ref{map_phi} and \ref{section_vanishing}, 
\[
\lim_\Delta \mathrm{Map}^c(G^{\bullet}, X_{fG}) 
\cong \mathrm{equa}\mspace{-1mu}\bigl[\mspace{-4.5mu}\xymatrix{X_{fG} 
\ar@<.9ex>[r]
\ar@<-.5ex>[r]
& \mathrm{Map}^c(G, X_{fG})}\mspace{-4.5mu}\bigr] 
= (X_{fG})^G = X^{hG}.\] 
This completes the proof of Theorem \ref{coconn}.

\bibliographystyle{amsplain}

\end{document}